\documentclass[12pt,a4paper]{amsart}

\usepackage{amssymb}
\usepackage{amscd}

\setlength{\textwidth}{418pt}
\setlength{\oddsidemargin}{17.5pt}
\setlength{\evensidemargin}{17.5pt}

\def\frak{\mathfrak}
\def\Bbb{\mathbb}
\def\Cal{\mathcal}

\let\phi\varphi

\newcommand{\x}{\times}
\renewcommand{\o}{\circ}

\newcommand{\al}{\alpha}
\newcommand{\be}{\beta}
\newcommand{\ga}{\gamma}

\newcommand{\ka}{\kappa}

\newcommand{\om}{\omega}
\newcommand{\ph}{\phi}
\newcommand{\ps}{\psi}
\renewcommand{\th}{\theta}
\newcommand{\si}{\sigma}
\newcommand{\ze}{\zeta}
\newcommand{\Ga}{\Gamma}
\newcommand{\La}{\Lambda}
\newcommand{\Ph}{\Phi}

\newcommand{\Om}{\Omega}

\newcommand{\im}{\operatorname{im}}

\newcommand{\gr}{\operatorname{gr}}
\newcommand{\id}{\operatorname{id}}

\newcommand{\ad}{\operatorname{ad}}

\newcommand{\Ad}{\operatorname{Ad}}
\newcommand{\Adb}{\operatorname{\underline{Ad}}}
\newcommand{\adb}{\operatorname{\underline{ad}}}

\newcounter{theorem}
\numberwithin{equation}{section}

\newtheorem{thm}[theorem]{Theorem}
\newtheorem*{thm*}{Theorem \thesubsection}
\newtheorem{lemma}[theorem]{Lemma}
\newtheorem{prop}[theorem]{Proposition}
\newtheorem{cor}[theorem]{Corollary}
\newtheorem*{lemma*}{Lemma \thesubsection}
\newtheorem*{prop*}{Proposition \thesubsection}
\newtheorem*{cor*}{Corollary \thesubsection}

\theoremstyle{definition}
\newtheorem{definition}[theorem]{Definition}
\newtheorem*{definition*}{Definition \thesubsection}

\newtheorem*{example*}{Example \thesubsection}
\theoremstyle{remark}
\newtheorem{remark}[theorem]{Remark}
\newtheorem*{remark*}{Remark \thesubsection}

\def\sideremark#1{\ifvmode\leavevmode\fi\vadjust{\vbox to0pt{\vss
 \hbox to 0pt{\hskip\hsize\hskip1em
 \vbox{\hsize3cm\tiny\raggedright\pretolerance10000
  \noindent #1\hfill}\hss}\vbox to8pt{\vfil}\vss}}}%
                        
                                                   %

\begin{document}
\renewcommand{\today}{} 

\title{Partial AHS-Structures, their Cartan description\\ 
and partial BGG sequences}

\author{Andreas \v Cap and Micha\l\ Andrzej Wasilewicz}

\address{Faculty of Mathematics\\
  University of Vienna\\
  Oskar--Morgenstern--Platz 1\\
  1090 Wien\\
  Austria}

\date{April 23, 2025}

\email{Andreas.Cap@univie.ac.at}
\email{michal.wasilewicz@univie.ac.at}

\subjclass{primary: 53C15; secondary: 53B15, 53C10, 53C12, 58J10, 58J60, 58J70}

\keywords{geometric structure, G-structure, Cartan geometry, AHS-structure, parabolic
  geometry, involutive distribution, BGG sequence}

\begin{abstract}
  G-structures and Cartan geometries are two major approaches to the description of
  geometric structures (in the sense of differential geometry) on manifolds of some
  fixed dimension $n$. We show that both descriptions naturally extend to the setting
  of manifolds of dimension $\geq n$ which are endowed with a distinguished
  involutive distribution $F$ of rank $n$. The resulting ``partial'' structures are
  most naturally interpreted as smooth families of standard G-structures or Cartan
  geometries on the leaves of the foliation defined by $F$.

  We prove that for the special class of AHS-structures (also known as $|1|$-graded
  parabolic geometries) the construction of a canonical Cartan geometry associated to
  a G-structure extends to this general setting. As an application, we prove that for
  partial AHS-structures there is an analog of the machinery of BGG sequences. This
  constructs sequences of differential operators of arbitrarily high order intrinsic
  to the structures. Under appropriate flatness conditions, these sequence are fine
  resolutions of sheaves which locally can be realized as pullbacks of sheaves on
  local leaf spaces for the foliation defined by $F$.
  \end{abstract}

  \thanks{This research was funded in whole or in part by the Austrian Science Fund
    (FWF): 10.55776/P33559 and 10.55776/Y963. For open access purposes, the authors
    have applied a CC BY public copyright license to any author-accepted manuscript
    version arising from this submission.  This article is based upon work from COST
    Action CaLISTA CA21109 supported by COST (European Cooperation in Science and
    Technology). https://www.cost.eu. We thank the anonymous referee for several 
    remarks that helped improving the paper.} 

  \maketitle

\pagestyle{myheadings}\markboth{\v Cap and Wasilewicz}{Partial AHS-structures}

\section{Introduction}\label{1}
The concept of G-structures represents the most important approach to a description
of geometric structures in the sense of differential geometry. The idea is to endow
each tangent space of a smooth manifold $M$ of dimension $n$ with some ``structure''
isomorphic to a ``model structure'' on $\Bbb R^n$ (or some vector space of real
dimension $n$) in a way depending smoothly on the base point. This ``structure'' can
be simple, like an inner product or some tensor of fixed type, or more intricate like
a subvariety in the tangent space or its projectivization, or an identification with
a tensor product of spaces of fixed dimensions. There is a uniform concept based on a
closed subgroup $G_0\subset GL(n,\Bbb R)$, which can be thought of as the group of
linear automorphisms of the model structure. The formal definition of a
$G_0$-structure (or a G-structure with structure group $G_0$) is then via a reduction
of structure group of the (linear) frame bundle of $M$. To include analogs of spin
structures it is preferable to allow $G_0$ to be a covering of a closed subgroup of
$GL(n,\Bbb R)$, which is easily possible. 

The geometric properties of such structures can be very diverse and the general
theory of G-structures relates them to algebraic properties of the structure group
$G_0$. The main tool here is the study of connections that are compatible with the
given structure and in particular of their torsion. This leads to results on
existence of canonical connections (like for Riemannian metrics, which are equivalent
to $O(n)$-structures) or of families of distinguished connections.

Cartan geometries represent a different general approach to a concept of geometric
structures. The basic idea here is to view a manifold $M$ endowed with a geometric
structure as a ``curved analog'' of a homogeneous space $G/P$. For Cartan geometries,
many properties of general geometries are more closely related to those of the
homogeneous model $G/P$. However, for many choices of $P\subset G$, the resulting
structures are not as interesting from a geometric point of view, since they may
involve data like the choice of a compatible connection which heavily influences
geometric properties.

There are overlaps between the two concepts, which represent many very interesting
geometric structures. A basic example is that $G_0$-structures that determine
a canonical connection can always be equivalently described as Cartan
geometries. While the Cartan point does not provide additional technical input in
these cases, it leads to a very nice and conceptual picture (see \cite{Sharpe}) which
generalizes the point of view that Riemannian manifolds are curved analogs of
Euclidean space.

There also are examples of $G_0$-structures that do not admit canonical connections
but still admit an equivalent description via a Cartan geometry. The most important
examples of this situation are AHS-structures and among them conformal
pseudo-Riemannian structures. Here the group $P$ is an extension of $G_0$, but the
group $G$ is a new ingredient, which is not easy to guess knowing only $G_0$. In the
conformal Riemannian case, $G_0=CO(n)$ the conformal group in dimension $n$ and $P$
is a semi-direct product of $G_0$ with the vector group $\Bbb R^{n*}$. However, $G$
is the (semi-)simple group $SO(n+1,1)$ of conformal isometries of the sphere
$S^n$. The tools derived from the Cartan description, like tractor bundles and BGG
sequences have lead to many important developments in conformal geometry and related
areas during the last decades.

\medskip

The aim of this article is to generalize both approaches and the relations between
them to a situation of smooth families of structures. An obvious idea would be to
consider families of structures on the fibers of a locally trivial bundle or, more
generally, of a surjective submersion. It turns out, however, that one can easily
deal with the even more general setting of involutive distributions (of constant
rank) on arbitrary manifolds. The resulting geometries can be interpreted as smooth
families of structures on the leaves of the induced foliation. Locally, they are
equivalent to families on fiber bundles, but, depending on global properties of the
foliation, interesting global phenomena may arise. Using the right approach, the
generalization is relatively straightforward, so we include some background on
G-structures and Cartan geometries in order to clarify this approach. Since there are
already quite a few concepts of geometric structures related to foliations around and
since technically we replace differential forms by partially defined differential
forms, we call the resulting concepts \textit{partial G-structures} and
\textit{partial Cartan geometries}.

The main result on these concepts we prove is that one can canonically associate to
any partial AHS-structure a normal partial Cartan geometry. To do this, one can use
the same normalization condition and all the algebraic results available for standard
AHS-structures. Moreover, there is a functorial geometric construction of the partial
Cartan geometry phrased in terms of distinguished partial connections compatible with a
partial AHS-structure (with normalized torsion). This construction is given in the
proof of Theorem \ref{thm2.7}, which shows that any partial AHS-structure is induced
by some normal partial Cartan geometry. Theorem \ref{thm2.8} is a strong uniqueness
result, showing in particular that a normal partial Cartan geometry is uniquely
determined up to isomorphism by the underlying partial AHS-structure.

In Section \ref{3} of the article, we give an application of the Cartan description
of partial AHS-structures. We show that there is an analog of the machinery of BGG
sequences, as originally introduced in \cite{CSS-BGG}, in the partial setting. In
particular, this shows that for any type of partial AHS-structure, there are
differential operators of arbitrarily high order intrinsically associated to the
structure, see \S \ref{3.6}. The machinery provides an extension of and a new
perspective on the relative BGG machinery introduced in \cite{Rel-BGG1} and
\cite{Rel-BGG2}. To some extent, the point of view we obtain is closer to the way
relative BGG resolutions were first used in twistor theory, see in particular the
book \cite{BEastwood}. There the authors considered fibrations between generalized flag
manifolds of the same group, observed that then the fibers are generalized flag
manifolds of a smaller group and used a ``BGG resolution along the fibers''. The
point of view of geometric structures on the fibers played only a small role in the
geometric version of the relative BGG machinery and, apart from one case discussed in
the thesis \cite{Michal-thesis} of the second author, has not been studied in detail
so far. The partial theory we develop actually shows that for the simplest examples
of structures, a ``BGG machinery along the fibers'' is available in a much more
general setting. We also show that if the partial Cartan geometry has vanishing
curvature, then partial BGG sequences are complexes and resolutions of sheaves that
can be locally described as pullbacks of sheaves on local leaf spaces of the
foliation, see Theorem \ref{thm3.6}. The same theorem also provides an analog of the
concept of normal BGG solutions in the partial setting.

\section{Partial AHS-structures and their Cartan description}\label{2}

\subsection{$G_0$-structures and AHS-structures}\label{2.1}
A closed subgroup $G_0\subset GL(n,\Bbb R)$ gives rise to a geometric structure on
manifolds of dimension $n$. Such a $G_0$-structure is defined as a reduction of the
linear frame bundle $\Cal PM\to M$ of a smooth $n$-manifold $M$ to the structure
group $G_0\subset GL(n,\Bbb R)$. Slightly more generally, one can consider the case
that $G_0$ is a covering of a closed subgroup $\underline{G_0}\subset GL(n,\Bbb R)$
to include structures like spin structures. In this case one has to use the
definition as a principal $G_0$-bundle $p:\Cal G_0\to M$ together with a morphism
$\Cal G_0\to \Cal PM$ of fiber bundles, which is equivariant over the obvious
homomorphism $G_0\to GL(n,\Bbb R)$ coming from the covering homomorphism
$G_0\to\underline{G}_0$.

It is well known that such a homomorphism can be equivalently encoded as a $\Bbb
R^n$-valued one-form $\th\in\Om^1(\Cal G_0,\Bbb R^n)$, which is strictly horizontal
and equivariant for the natural representation of $G_0$ on $\Bbb R^n$ coming from the
homomorphism $G_0\to GL(n,\Bbb R)$ from above. Here being strictly horizontal means
that for each $u\in\Cal G_0$ the kernel of $\th(u):T_u\Cal G_0\to\Bbb R^n$
coincides with the vertical subspace $\ker(T_up)$. Since $M$ is assumed to be
$n$-dimensional, this means that $\th(u)$ descends to a linear isomorphism
$T_{p(u)}M\to\Bbb R^n$ which defines the homomorphism to the frame bundle.

AHS-structures are a special class of such structures that arise from simple Lie
algebras. To specify such a structure, one starts from a Lie group $G$ whose Lie
algebra $\frak g$ is simple and endowed with a so-called $|1|$-grading, i.e.\ a
vector space decomposition $\frak g=\frak g_{-1}\oplus\frak g_0\oplus\frak g_1$ such
that $[\frak g_i,\frak g_j]\subset\frak g_{i+j}$ with the convention that $\frak
g_{\ell}=\{0\}$ if $|\ell|>1$. Such gradings are well understood from the
classification of Hermitian symmetric spaces, see Sections 3.2.3 and 3.2.10 of
\cite{book} for more details and the classification. The subalgebra $\frak p:=\frak
g_0\oplus\frak g_1$ is then a parabolic subalgebra of $\frak g$. It is well known
that the normalizer $\{g\in G:\Ad(g)(\frak p)\subset\frak p\}$ is a closed subgroup
of $G$ that has Lie algebra $\frak p$. One then chooses a \textit{parabolic subgroup}
$P\subset G$ corresponding to $\frak p$, i.e.\ a subgroup that lies between this
normalizer and its connected component of the identity. In particular, this implies
that $P$ has Lie algebra $\frak p$.

It is also well known that the exponential mapping restricts to a diffeomorphism from
$\frak g_1$ onto a closed normal subgroup $P_+\subset P$, so $P/P_+$ has Lie algebra
$\frak p/\frak g_1\cong\frak g_0$. It turns out that
$G_0:=\{g\in P:\forall i:\Ad(g)(\frak g_i)\subset\frak g_i\}$, which obviously is a
closed subgroup of $P$, projects isomorphically onto $P/P_+$, so $P$ is the
semi-direct product of $G_0$ and $P_+$. By construction, the group $G_0$ acts via the
adjoint action on each $\frak g_i$ and one shows that the resulting map
$G_0\to GL(\frak g_{-1})$ is infinitesimally injective, so putting
$n:=\dim(\frak g_{-1})$ there is the concept of a $G_0$-structure on manifolds of
dimension $n$. Of course, one can view the equivariant form $\th$ describing such a
structure as having values in $\frak g_{-1}\cong\frak g/\frak p$.

Some choices of $G$ and $P$ show exceptional behavior from the point of
  view of the resulting $G_0$-structures and we have to exclude them in what
  follows. There is a $|1|$-grading of $\frak{sl}(n+1,\Bbb K)$ with $\frak
  g_{-1}=\Bbb K^n$ and $\frak g_0\cong\frak{gl}(n,\Bbb K)$ for $\Bbb K=\Bbb R$ or
  $\Bbb C$ and this is the only available $|1|$-grading if $n=1$. Hence a
  $G_0$-structure in this case contains almost no information. For $n\geq 2$, the
  corresponding pairs $(G,P)$ are of interest for the picture of Cartan geometries
  that we will discuss below, since they lead to equivalent descriptions of classical
  projective structures and an almost complex analog known as $c$-projective
  structures, see \cite{c-proj}. Since these structures need a different description,
  we will not consider them in this article. The case of projective structures is
  dealt with in in detail in \cite{Michal-thesis} and we'll only make short remarks
  on this case below. For clarity of terminology, we phrase this as a definition.
 
  \begin{definition}\label{def2.1}
    An \textit{AHS-pair} $(G,P)$ consist of a simple Lie group $G$ and a parabolic
    subgroup $P\subset G$ corresponding to a $|1|$-grading of the Lie algebra
    $\frak g$ of $G$, which is different from the grading of $\frak{sl}(n+1,\Bbb K)$
    with $\frak g_{-1}=\Bbb K^n$ and $\frak g_0\cong\frak{gl}(n,\Bbb K)$ for
    $\Bbb K=\Bbb R$ or $\Bbb C$. The subgroup $G_0\subset P$ of all elements whose
    adjoint action preserves the grading of $\frak g$ will be called the structure
    group of the AHS-structure determined by $(G,P)$.
  \end{definition}

The importance of AHS-structures as defined here is visible from the classification
results of S.\ Kobayashi and T.\ Nagano in \cite{KN1}, which use the concept of
prolongation of $G_0$-structures. Equivalently, this can be phrased in terms of the
question whether automorphisms of the structure in question are locally determined by
some finite jet in one point. The classical example of this situation is provided by
pseudo-Riemannian metrics, since local isometries for such metrics are determined
uniquely in a neighborhood of a point by their one-jet in that point. On the other
end of the spectrum, there are (almost) symplectic structures, for which
automorphisms are not locally determined by any finite jet in a point. The main
result of \cite{KN1} then is that the AHS-structures are the only $G_0$-structures
for which $G_0$ acts irreducibly on $\frak g_{-1}$ and automorphisms are locally
determined by some finite jet in a point but not by the one-jet. Indeed,
automorphisms for any AHS-structure are always locally determined by their two-jet in
a point, which makes them difficult to deal with. The best known example of an
AHS-structure are conformal structures in dimensions $\geq 3$, see \cite{AHS2} for
more examples of AHS-structures.

\subsection{Partial $G_0$-structures}\label{2.2}
An obvious idea for defining smooth families of geometric structures is to look at
fiber bundles and geometric structures on each fiber which, in an appropriate sense,
depend smoothly on the base point. It turns out that for $G_0$-structures (and also
for Cartan geometries as discussed below), there is a natural generalization, which
can locally be interpreted as a smooth family in that sense. Consider a smooth
manifold $M$ of dimension $n+k$ endowed with a distinguished distribution
$F\subset TM$ of rank $n$. Then consider the linear frame bundle
$\Cal PF=GL(\Bbb R^n,F)$ of the vector bundle $F$. This is a principal fiber bundle
with structure group $GL(n,\Bbb R)$ whose fiber over a point $x\in M$ consists of all
linear isomorphisms $\Bbb R^n\to F_x$, the fiber of $F$ over $x$. In principle, one
could define partial $G_0$-structures for an arbitrary distribution $F$, but as far
as we know interesting results are only available in the case that $F$ is involutive,
so we restrict to that case.

\begin{definition}\label{def2.2.1}
  Let $G_0$ be a covering of a closed subgroup $\underline{G}_0\subset GL(n,\Bbb R)$,
  $M$ a smooth manifold of dimension $\geq n$, and $F\subset TM$ an involutive
  distribution of rank $n$. Then a \textit{partial $G_0$-structure on $(M,F)$} is
  given by a principal fiber bundle $\Cal G_0\to M$ with structure group $G_0$, and a
  morphism $\Ph:\Cal G_0\to\Cal PF$ of fiber bundles with base map $\id_M$, which is
  equivariant over the homomorphism $G_0\to GL(n,\Bbb R)$ induced by the covering
  homomorphism $G_0\to\underline{G}_0$. If the dimension of $M$ is $n+k$, then we
  will call this a \textit{partial $G_0$-structure of codimension $k$}.
\end{definition}

To define morphisms between partial $G_0$-structures, we need some
observations. Consider two manifolds $M^i$ endowed with involutive rank $n$
distributions $F^i\subset TM^i$ for $i=1,2$ and let $f:M^1\to M^2$ be a smooth
map. Suppose that for each $x\in M^1$, the tangent map $T_xf:T_xM^1\to T_{f(x)}M^2$
restricts to a linear isomorphism $F^1_x\to F^2_{f(x)}$. Then there is natural lift
of $f$ to a principal bundle homomorphism $\Cal Pf:\Cal PF^1\to\Cal PF^2$ defined by
$\Cal Pf(u)=T_xf\circ u:\Bbb R^n\to F^2_{f(x)}$ for $u\in\Cal P_xF^1$, i.e.\
$u:\Bbb R^n\to F^1_x$. Using this, we can now introduce the natural notion of
morphisms.

\begin{definition}\label{def2.2.2}
  Let $(\Cal G_0^i\to M^i,F^i,\Phi^i)$ be partial $G_0$-structures as in Definition
  \ref{def2.2.1} for $i=1,2$. The a \textit{morphism}
  $(\Cal G_0^1\to M^1,F^1,\Phi^1)\to (\Cal G_0^2\to M^2,F^2,\Phi^2)$ is a
  homomorphism $\Psi:\Cal G_0^1\to\Cal G_0^2$ of principal bundles such that the base
  map $\psi:M^1\to M^2$ has the property that for any $x\in M^1$, the tangent map
  $T_x\psi$ restricts to a linear isomorphism $F^1_x\to F^2_{\psi(x)}$ and such that
  $\Phi^2\o \Psi=\Cal P\psi\o\Phi^1$.
\end{definition}

Observe that in the case that $\dim(M)=n$ and $F=TM$, we recover the usual concepts
of $G_0$-structures and their morphisms (which then automatically are local
diffeomorphisms). Similarly as in that setting, partial $G_0$-structures can be
described via an analog of strictly horizontal, equivariant forms, but we need some
background to introduce this. Let $p:\Cal G_0\to M$ be a principal fiber bundle. Then
for each $u\in\Cal G_0$ with $x=p(u)$ the tangent map $T_up:T_u\Cal G_0\to T_xM$ is
surjective, and we define $\widetilde{F}_u:=(T_up)^{-1}(F_x)\subset T_u\Cal
G_0$. Evidently, these spaces fit together to define a smooth distribution
$\widetilde{F}\subset T\Cal G_0$ which contains the vertical subbundle $V\Cal G_0$ of
$\Cal G_0\to M$. In particular, given a vector space $E$, we can consider the vector
bundle $L(\widetilde{F},E)\to\Cal G_0$, whose fiber over a point $u\in\Cal G_0$ is
the space $L(\widetilde{F}_u,E)$ of linear maps from $\widetilde{F}_u$ to
$E$. Sections of this vector bundle can be considered as ``partially defined
$E$-valued differential forms'' on $\Cal G_0$.

For a section $\th\in\Ga(L(\widetilde{F},\Bbb R^n))$, there is an obvious notion of
being \textit{strictly horizontal}, namely that for each $u\in\Cal G_0$, the kernel
of $\th(u):\widetilde{F}_u\to\Bbb R^n$ coincides with the vertical subspace $V_u\Cal
G_0=\ker(T_up)$. Moreover, for $g\in G_0$ the principal right action $r^g:\Cal
G_0\to\Cal G_0$ satisfies $p\o r^g=p$, which readily implies that $\widetilde{F}$ is
invariant under the principal right action in the sense that $T_ur^g:T_u\Cal G_0\to
T_{u\cdot g}\Cal G_0$ maps $\widetilde{F}_u$ to $\widetilde{F}_{u\cdot g}$. This
means that given $\al\in\Ga(L(\widetilde{F},E))$ for some vector space $E$ and $g\in
G_0$, there is a well defined pull back $(r^g)^*\al\in\Ga(L(\widetilde{F},E))$
defined by the usual formula $((r^g)^*\al)(u)(\xi)=\al(u\cdot g)(T_ur^g(\xi))$. Given
a representation of $G_0$ on $E$, one can thus define equivariancy of
$\al\in\Ga(L(\widetilde{F},E))$ in the usual way. Using these observations, we can
now formulate

\begin{prop}\label{prop2.2}
Let $G_0$ be a covering of a closed subgroup $\underline{G}_0\subset GL(n,\Bbb R)$
and let $K$ be the kernel of the covering homomorphism $G_0\to\underline{G}_0$.
  
(1) For a manifold $M$ of dimension $\geq n$ and an involutive distribution
$F\subset TM$, a partial $G_0$-structure on $(M,F)$ is equivalent to a principal
$G_0$-bundle $p:\Cal G_0\to M$ endowed with a strictly horizontal, $G_0$-equivariant
section $\th\in\Ga(L(\widetilde{F},\Bbb R^n))$.

(2) For two partial $G_0$-structures described as in (1), suppose that we have given
a principal bundle morphism $\Psi:\Cal G_0^1\to\Cal G_0^2$, whose base map
$\psi:M^1\to M^2$ satisfies the condition from Definition \ref{def2.2.2}. Then there
is a well defined pullback $\Psi^*\th^2\in\Ga(L(\widetilde{F}^1,\Bbb R^n)$ and $\Psi$
is a morphisms of partial $G_0$-structures, if and only if $\Psi^*\th^2=\th^1$.

(3) A morphism $\Psi:\Cal G_0^1\to\Cal G_0^2$ of partial $G_0$-structures is
determined by its base map $\psi$ up to a locally constant function $\Cal G_0^1\to
K$.
\end{prop}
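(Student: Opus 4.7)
The plan is to reduce all three parts to pointwise linear-algebra on the subbundle $\widetilde F$, mimicking the classical correspondence between frame-bundle reductions and soldering forms, but restricted from $T\Cal G_0$ to $\widetilde F$.

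For part (1), I would work pointwise. Given $\Ph$ and $u\in\Cal G_0$ with $x:=p(u)$, the map $\Ph(u):\Bbb R^n\to F_x$ is a linear isomorphism while $T_up$ restricts to a surjection $\widetilde F_u\to F_x$ with kernel $V_u\Cal G_0$. Define $\th(u)(\xi):=\Ph(u)^{-1}(T_up\cdot\xi)$ for $\xi\in\widetilde F_u$. Then $\ker\th(u)=\ker(T_up|_{\widetilde F_u})=V_u\Cal G_0$, giving strict horizontality, and $G_0$-equivariance of $\Ph$ (i.e.\ $\Ph(u\cdot g)=\Ph(u)\o\underline g$) translates directly into $((r^g)^*\th)(u)=\underline g^{-1}\o\th(u)$, which is equivariance of $\th$ for the natural $G_0$-action on $\Bbb R^n$. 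Smoothness of $\th$ is automatic from the smoothness of $\Ph$ and the pointwise algebraic nature of the construction. Conversely, given strictly horizontal equivariant $\th$, strict horizontality together with $\dim(\widetilde F_u/V_u\Cal G_0)=n$ forces $\th(u)$ to descend through $T_up$ to a linear isomorphism $F_x\to\Bbb R^n$, whose inverse defines $\Ph(u)$, and equivariance of $\Ph$ follows by reversing the computation.

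For part (2), I would first check that under the assumption on $\ps$ the tangent map $T_u\Ps$ sends $\widetilde F^1_u$ into $\widetilde F^2_{\Ps(u)}$. Differentiating $p^2\o\Ps=\ps\o p^1$ at $u$, for $\xi\in\widetilde F^1_u$ one computes $T_{\Ps(u)}p^2(T_u\Ps\cdot\xi)=T_{p^1(u)}\ps(T_up^1\cdot\xi)\in F^2_{\ps(x)}$, as required. Hence $(\Ps^*\th^2)(u)(\xi):=\th^2(\Ps(u))(T_u\Ps\cdot\xi)$ is a well-defined section of $L(\widetilde F^1,\Bbb R^n)$. Unraveling the recipe from part (1), the condition $\Ph^2\o\Ps=\Cal P\ps\o\Ph^1$ is equivalent to the statement that at every $u$ the linear maps $\th^1(u)$ and $(\Ps^*\th^2)(u)$ induce the same isomorphism $F^1_x\to\Bbb R^n$, i.e.\ coincide as elements of $L(\widetilde F^1_u,\Bbb R^n)$.

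For part (3), suppose $\Ps_1,\Ps_2$ are morphisms with common base map $\ps$. Then $\Ps_1(u)$ and $\Ps_2(u)$ lie in the same fibre of $\Cal G_0^2\to M^2$, so there is a unique smooth map $g:\Cal G_0^1\to G_0$ with $\Ps_2(u)=\Ps_1(u)\cdot g(u)$. The morphism condition together with equivariance of $\Ph^2$ gives $\Ph^2(\Ps_1(u))=\Ph^2(\Ps_2(u))=\Ph^2(\Ps_1(u))\o\underline{g(u)}$, forcing $\underline{g(u)}=\id$ and hence $g(u)\in K$. Since $K=\ker(G_0\to\underline G_0)$ is a discrete subgroup (as the kernel of a covering homomorphism), the smooth map $g:\Cal G_0^1\to K$ must be locally constant. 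The only mildly delicate point in the whole argument is the dimension count in part (1) which promotes injectivity of the induced map $\widetilde F_u/V_u\Cal G_0\to\Bbb R^n$ to a bijection; beyond that, parts (2) and (3) are formal consequences of the correspondence established in (1).
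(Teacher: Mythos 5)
Your proposal is correct and follows essentially the same route as the paper: your direct formula $\th(u)(\xi)=\Ph(u)^{-1}(T_up\cdot\xi)$ is exactly the pullback $\Ph^*\underline{\th}$ of the tautological form on $\Cal PF$ that the paper uses, the converse in (1) is the same dimension-count argument, and (2) and (3) proceed by the same computations. The only cosmetic difference is that you bypass naming the tautological form and phrase the equivalence in (2) via the descended isomorphisms $F^1_x\to\Bbb R^n$ rather than the paper's explicit comparison of $\Cal P\ps\o\Ph^1\o\th^1(u)$ with $\Ph^2\o\Ps\o(\Ps^*\th^2)(u)$; both amount to the same observation that maps with kernel the vertical subspace are determined by their descent.
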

\begin{proof}
  These are simple variations of the standard proofs for $G_0$-structures.

  (1) We have to show that a morphism $\Ph:\Cal G_0\to \Cal PF$ as in Definition
  \ref{def2.2.1} is equivalent to a section $\th$ with the specified properties. One
  first observes that on $\pi:\Cal PF\to M$, there is a tautological section
  $\underline{\th}$ of $L(\widetilde{F},\Bbb R^n)$, which is strictly horizontal and
  $GL(n,\Bbb R)$ equivariant. This is defined by
  $\underline{\th}(u)(\xi):=u^{-1}(T_u\pi(\xi))$, taking into account that
  $u:\Bbb R^n\to F_{\pi(u)}$ is a linear isomorphism and $T_u\pi(\xi)\in F_{\pi(u)}$
  for $\xi\in\widetilde{F}_u$. Given a morphism $\Phi$ as above, one observes that
  the tangent maps of $\Phi$ send the pre-image of $F$ in $T\Cal G_0$ to the
  pre-image of $F$ in $T\Cal PF$. Hence we can define $\th:=\Phi^*\underline{\th}$
  and verify that this has the required properties.

Conversely, given $(p:\Cal G_0\to M,\th)$ we observe that for $u\in\Cal G_0$, $T_up$
restricts to a map $\widetilde{F}_u\to F_{p(u)}$ which descends to a linear
isomorphism $\widetilde{F}_u/V_u\Cal G_0\to F_{p(u)}$. On the other hand, since
$\ker(\th(u))=V_u\Cal G_0$ it descends to an injective linear map
$\widetilde{F}_u/V_u\Cal G_0\to\Bbb R^n$ which has to be a linear isomorphism since
both spaces have the same dimension. Composing the inverse of this isomorphism with
the above isomorphism, one obtains a linear isomorphism $\Phi(u):\Bbb R^n\to F_u$, and
one directly verifies that these fit together to define a morphism $\Phi:\Cal
G_0\to\Cal PF$ with the required properties.

(2) The base map $\psi$ of $\Psi$ has the property that for each $x\in M^1$, the
tangent map $T_x\psi$ restricts to a linear isomorphism $F^1_x\to F^2_{\psi(x)}$ and,
as discussed above, this implies that there is an induced principal bundle homomorphism
$\Cal P\psi:\Cal PF^1\to\Cal PF^2$. By definition, we have $Tp^2\o T\Psi=T\psi\o
Tp^1$, so for each $u\in\Cal G_0^1$, we get
$T_u\Psi(\widetilde{F}^1_u)\subset\widetilde{F}^2_u$ and an element in the kernel of
this map has to lie in the kernel of $T_up^1$. This shows that the restriction of
$T_u\Psi$ to $\widetilde{F}^1_u$ maps this space injectively and hence bijectively to
$\widetilde{F}^2_u$. This readily implies that $\Psi^*\th^2$ is well
defined.

For $\xi\in\widetilde{F}^1_u$ and $x=p^1(u)$, we get
$T_up^1(\xi)=\Phi^1(u)(\th^1(u)(\xi))$ and hence
$$\Cal P\psi(\Phi^1(u))(\th^1(u)(\xi))=T_x\psi(T_up^1(\xi)).$$
On the other hand, $\Psi^*\th^2(u)(\xi)=\th^2(\Psi(u))(T_u\Psi(\xi))$, and
$\Phi^2(\Psi(u))$ maps this element to
$T_{\Psi(u)}p^2(T_u\Psi(\xi))=T_x\psi(T_up^1(\xi))$. This says that as maps
$\widetilde{F}^1_u\to F^2_{\psi(x)}$, we get
$$
\Cal P\ps\o \Phi^1\o \th^1(u)=\Phi^2\o\Psi\o(\Psi^*\th^2)(u).
$$
Since both sides are surjective and have kernel the vertical subspace, we conclude
that $\Cal P\ps(\Phi^1(u))=\Phi^2(\Psi(u))$ is equivalent to
$\th^1(u)=(\Psi^*\th^2)(u)$, so (2) follows. 

(3) Let $\Psi,\widetilde{\Psi}:\Cal G_0^1\to\Cal G_0^2$ be two morphisms with the
same base map $\psi:M^1\to M^2$. Then there is a smooth function $g:\Cal G_0^1\to
G_0$ such that $\widetilde{\Psi}(u)=\Psi(u)\cdot g(u)$ for any $u\in\Cal G_0^1$. But
since both maps are morphisms, we know that $\Phi^2\o\widetilde{\Psi}=\Phi^2\o\Psi$,
which implies that $g(u)\in K$ for any $u$. But by assumption $G_0$ is a 
covering of $\underline{G}_0$, so $K$ is discrete and hence the smooth function $g$
has to be locally constant. 
\end{proof}

In particular, given an AHS-pair $(G,P)$ and the structure group $G_0$ of the
corresponding AHS-structure as in Definition \ref{def2.1}, we have the notion of a
partial AHS-structure on a manifold of dimension $\geq n:=\dim(\frak g_{-1})$ endowed
with an involutive distribution of rank $n$. This can be described as a principal
$G_0$-bundle $p:\Cal G_0\to M$ endowed with a strictly horizontal, $G_0$-equivariant
section $\th\in\Ga(L(\widetilde{F},\frak g_{-1}))$.

\subsection{Examples}\label{2.3}
(1) Let us first discuss how smooth families of $G_0$-structures fit into the picture
of partial $G_0$-structures. Suppose that $f:M\to N$ is a surjective submersion with
$\dim(N)=k$ and $\dim(M)=n+k$ and let $F\subset TM$ be the vertical subbundle of $f$,
i.e.\ $F_x=\ker(T_xf:T_xM\to T_{f(x)}N)$. Then it is well known that for each $y\in
N$, the fiber $M_y:=f^{-1}(\{y\})\subset M$ is a smooth submanifold, so we can
naturally view $M$ as a smooth family of $n$-dimensional manifolds parametrized by
the $k$-dimensional manifold $N$. Moreover, for $x\in M$ with $f(x)=y$, the vertical
subspace $F_x$ is exactly the tangent space $T_xM_y$ to the fiber. Thus the
restriction $\Cal PF|_{M_y}$ can be naturally identified with the linear frame bundle
of $M_y$. For a partial $G_0$-structure $(\Cal G_0\to M,\th)$ we can therefore view
the restriction $\Cal G_0|_{M_y}$ endowed with the restriction of $\th$ as a
$G_0$-structure on $M_y$. Hence in this situation a partial $G_0$-structure of
codimension $k$ can be naturally viewed as a $k$-dimensional smooth family of
$G_0$-structures.

For a general involutive distribution $F\subset TM$, this picture is available
locally. By the Frobenius theorem there are local leaf spaces, i.e.\ each $x\in M$
has a neighborhood $U\subset M$ for which there is a surjective submersion $f:U\to N$
onto a smooth manifold $N$ such that $\ker(T_zf)=F_z$ for any $z\in U$. The global
picture is more subtle, the best way to think about this is as a smooth family of
$G_0$-structures on the leaves of the foliation defined by $F$. Consider such a leaf,
i.e.\ an injective immersion $i:L\to M$ whose image is a maximal connected integral
submanifold for $F$.  Given a partial $G_0$-structure $(\Cal G_0\to M,\th)$, we can
form the pullback $i^*\Cal G_0\to L$ and we can also define $i^*\th$ by the usual
formula. Now by definition, for each $x\in L$, the tangent map $T_xi$ restricts to a
linear isomorphism $T_xL\to F_x$. Thus $i^*\th$ is defined on all of $T_xL$, so $\th$
is a strictly horizontal $\Bbb R^n$-valued one-form on $i^*\Cal G_0\to L$, making it
into a $G_0$-structure in the standard sense. In this point of view, the smoothness
of the family of $G_0$-structures is a much more subtle condition, however. Recall
that there are well known examples of foliations that have a dense leaf, and for such
a foliation, the whole partial $G_0$-structure is uniquely determined by the induced
structure on that leaf.

(2) Let us discuss explicit descriptions of partial $G_0$-structures in the case of
the most popular AHS-structure, namely conformal structures. The relevant
$|1|$-grading here has the form
$$\frak{so}(n+1,1)=\mathbb R^n\oplus \frak{co}(n)\oplus\mathbb R^{n*},$$ see Section
1.6.3 of \cite{book} for details. Choosing an appropriate group $G$, the adjoint
action on $\frak g_{-1}\cong\Bbb R^n$ identifies the group $G_0$ with the conformal
group $CO(n)$. Now let $M$ be a smooth manifold of dimension $n+k$, $F\subset TM$ an
involutive distribution of rank $n$ and $\Cal PF\to M$ the linear frame bundle of
$F$. Completely parallel to standard $G$-structures, a reduction of $\Cal PF$ to the
structure group $CO(n)\subset GL(n,\Bbb R)$ is equivalent to the choice of a class
$[g_x]$ of non-degenerate positive-definite bilinear forms $g_x$ on the fibers $F_x$
of $F$ which are related to each other by multiplication by a positive number. These
have to depend smoothly on the point $x$ in the sense that locally there are smooth
sections of $S^2F^*$ whose values in each point lie in the chosen class. Otherwise
put, one has to prescribe a smooth ray-subbundle in $S^2F^*$. Since any such bundle
is globally trivial, there are global smooth sections, so one can equivalently define
a partial conformal structure as a equivalence class of smooth positive definite
bundle metrics on $F$, with two such metrics $g$ and $\hat g$ being equivalent if and
only if $\hat g=e^{2f}g$ for some smooth function $f:M\to\Bbb R$.

Here it is also relevant to consider coverings of subgroups of $GL(n,\Bbb
R)$. Indeed, choosing $G$ to be a spin group, one arrives at $G_0=CSpin(n)$ and hence
at the concept of a partial conformal spin structure.

\subsection{Cartan description of AHS-structures}\label{2.4}
The most effective tools for the study of AHS-structures, which take the higher order
phenomena mentioned in Definition \ref{def2.1} into account, are derived from the
equivalent description as Cartan geometries. This makes AHS-structures into the
simplest examples of the class of parabolic geometries that is discussed in the book
\cite{book}. Indeed, as a special case of the results of existences of canonical
Cartan connections in the parabolic case, one obtains canonical Cartan connections
associated to all AHS-structures.

For an AHS-pair $(G,P)$ as in Definition \ref{def2.1}, a Cartan geometry of type $(G,P)$ on a
smooth manifold $M$ of dimension $\dim(G/P)$ is given by a principal $P$-bundle
$p:\Cal G\to M$ endowed with a Cartan connection $\om\in\Om^1(\Cal G,\frak g)$. This
means that for each $u\in\Cal G$, $\om(u):T_u\Cal G\to\frak g$ is a linear
isomorphism, $\om$ is $P$-equivariant, and for the fundamental vector field
$\ze_A\in\frak X(\Cal G)$ generated by $A\in\frak p$, one has $\om(\ze_A(u))=A$ for
any $u\in\Cal G$. There is an obvious associated concept of morphisms: Given
$(p^i:\Cal G^i\to M^i,\om^i)$ for $i=1,2$, a morphism is a principal bundle
homomorphism $\Phi:\Cal G^1\to\Cal G^2$ such that $\Phi^*\om^2=\om^1$ (which
immediately implies that $\Phi$ is a local diffeomorphism). Similarly as in
Proposition \ref{prop2.2}, one proves that under an assumption on infinitesimal
effectivity, which is satisfied in all cases we consider, a morphism of Cartan
geometries is determined by its base map up to a locally constant function, see
Proposition 1.5.3 of \cite{book}.
 
Now it is easy to see that any such Cartan geometry has an underlying AHS-structure
corresponding to $G_0=P/P_+$. Via the principal right action, $P_+\subset P$ acts
freely on $\Cal G$ and the quotient $\Cal G_0:=\Cal G/P_+$ is easily seen to be a
principal fiber bundle over $M$ with structure group $P/P_+=G_0$. Moreover,
projecting the values of $\om$ to $\frak g/\frak p\cong\frak g_{-1}$ one obtains an
equivariant element of $\Om^1(\Cal G,\frak g_{-1})$, which is easily seen to descend
to a strictly horizontal, equivariant form $\th\in\Om^1(\Cal G_0,\frak g_{-1})$. The
underlying AHS-structure then is $(\Cal G_0,\th)$. This construction immediately
implies that a morphism of Cartan geometries induces a morphism between the underling
AHS-structures.

The question is whether one can revert this procedure, i.e.\ associate to a
$G_0$-structure canonically a Cartan geometry which has that underlying
structure. Making choices, it is rather easy to construct some Cartan geometry which
has this property, but doing this in a canonical way turns out to be more
subtle. Indeed one can trivially modify a Cartan connection $\om$ without changing
the underlying structure, so to get a canonical Cartan geometry one has to impose
some \textit{normalization condition} on the Cartan connection. The best way to do
this is via the curvature $K\in\Om^2(\Cal G,\frak g)$ of $\om$ that is defined by
\begin{equation}\label{eq:curature}
K(\xi,\eta)=d\om(\xi,\eta)+[\om(\xi),\om(\eta)] \quad \text{\ for\ } \xi,\eta\in\frak
X(\Cal G). 
\end{equation}
So $K$ measures the extent to which $\om$ fails to satisfy the Maurer-Cartan
equation. This also leads to the fundamental result that $K$ vanishes if and only if
$(\Cal G\to M,\om)$ is locally isomorphic (as a Cartan geometry) to $G\to G/P$ with
the Maurer-Cartan form of $G$ as its Cartan connection. It is easy to see that $K$ is
$P$-equivariant and horizontal, i.e.\ it vanishes upon insertion of one vertical
vector field.

Via the Cartan connection $\omega$, the curvature $K$ can be equivalently encoded in
the \textit{curvature function} $\ka:\Cal G\to \La^2\frak g^*\otimes\frak g$ which is
characterized by $K(u)(\xi,\eta)=\ka(u)(\om(u)(\xi),\om(u)(\eta))$. The properties of
$K$ imply that $\ka$ is a $P$-equivariant function and actually has values in
$\La^2(\frak g/\frak p)^*\otimes\frak g$. Via the Killing form $(\frak g/\frak
p)^*\cong\frak g_1$, so we can also view the values of $\ka$ as lying in $\La^2\frak
g_1\otimes\frak g$. Now $\frak g_1\subset\frak g$ is an abelian Lie subalgebra which
acts on $\frak g$ by the restriction of the adjoint action, so there is a Lie algebra
homology differential, which for historical reasons is usually called the
\textit{Kostant codifferential} and denoted by $\partial^*:\La^2\frak g_1\otimes\frak
g\to\frak g_1\otimes\frak g$. The construction implies that $\partial^*$ is
$P$-equivariant, the explicit formula is stated in formula \eqref{codiff} below. A
Cartan geometry of type $(G,P)$ is called \textit{normal} if and only if
$\partial^*\o\ka=0$. Now we can state the fundamental theorem on existence and
uniqueness of canonical Cartan connections for AHS-structures, which is proved in
Section 3.1 of \cite{book} in the more general setting of parabolic geometries:

\begin{thm}\label{thm2.4}
  Consider an AHS-pair $(G,P)$ as in Definition \ref{def2.1} and let $G_0\subset P$
  be the structure group of the corresponding AHS-structure. For any $G_0$-structure
  $(\Cal G_0\to M,\th)$, there is a normal Cartan geometry $(p:\Cal G\to M,\om)$ with
  this underlying $G_0$-structure, so in particular $p$ factorizes through
  $\Cal G_0$. Moreover, if $(p^i:\Cal G^i\to M^i)$ are normal Cartan geometries for
  $i=1,2$ and $\Phi:\Cal G^1_0\to\Cal G^2_0$ is a morphism of the underlying
  $G_0$-structures, then there is a unique lift $\tilde\Phi:\Cal G^1\to\Cal G^2$ of
  $\Phi$ which is a morphism of Cartan geometries. In particular, two normal Cartan
  geometries inducing the same underlying $G_0$-structure $(\Cal G_0\to M,\th)$ are
  isomorphic via an isomorphism that induces the identity on $\Cal G_0$.
\end{thm}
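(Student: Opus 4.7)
The plan is to implement the standard two-stage construction of canonical Cartan connections for parabolic geometries (as in Section 3.1 of \cite{book}), simplified considerably by the fact that an AHS-pair has grading length one. First, to get a principal $P$-bundle, use the semi-direct product decomposition $P=G_0\ltimes P_+$ to form $\Cal G:=\Cal G_0\x_{G_0} P$ with $G_0$ acting on $P$ by left multiplication. Then $\Cal G\to M$ is a principal $P$-bundle and $\Cal G\to\Cal G/P_+=\Cal G_0$ is a principal $P_+$-bundle, so $\Cal G_0$ embeds $G_0$-equivariantly as a zero section. To build a candidate Cartan connection, choose any principal connection $\ga\in\Om^1(\Cal G_0,\frak g_0)$ and any ``Rho tensor'' $\Rho$, a strictly horizontal $G_0$-equivariant $\frak g_1$-valued one-form on $\Cal G_0$. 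The triple $(\th,\ga,\Rho)$ specifies, along the section $\Cal G_0\subset\Cal G$, a $\frak g=\frak g_{-1}\oplus\frak g_0\oplus\frak g_1$-valued one-form, and it extends uniquely to a $P$-equivariant $\frak g$-valued form $\om$ on $\Cal G$ that reproduces fundamental vector fields, hence to a Cartan connection.

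Next, compute the curvature function $\ka:\Cal G\to \La^2\frak g_1\otimes\frak g$ and decompose it into homogeneous components with respect to the grading. Since the grading has length one, only finitely many homogeneities occur, and the normalization condition $\partial^*\o\ka=0$ splits into one equation per homogeneity. The algebraic heart of the argument is Kostant's Hodge decomposition: each $\La^k\frak g_1\otimes\frak g$ decomposes as a $G_0$-module into $\im\partial^*$, $\im\partial$ and the kernel of the Kostant Laplacian $\partial\partial^*+\partial^*\partial$, where $\partial$ is the Chevalley--Eilenberg differential dual to $\partial^*$. Starting at the lowest homogeneity, modify $\ga$ by a suitable section of $T^*M\otimes\frak g_0$ to cancel the $\im\partial^*$-part of the torsion; the residual freedom sits in $\ker\partial^*$. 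Then modify $\Rho$ in successively higher homogeneities to cancel the remaining $\im\partial^*$-parts of $\ka$. The finiteness of the grading guarantees termination and yields a normal $\om$.

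For the uniqueness assertion, given a morphism $\Ph:\Cal G_0^1\to\Cal G_0^2$, the extension construction produces a natural candidate lift $\tilde\Ph_0:\Cal G^1\to\Cal G^2$ covering $\Ph$ and intertwining the $P$-actions. Its defect $\al:=(\tilde\Ph_0)^*\om^2-\om^1$ is a $P$-equivariant $\frak p$-valued one-form vanishing on fundamental vector fields for $\frak p$. Altering $\tilde\Ph_0$ by left translation with a smooth $P_+$-valued function $f$ on $\Cal G^1$ modifies $\al$ at leading order by $\partial$ applied to the derivatives of $f$, with higher-order corrections living in strictly deeper homogeneity. Imposing that both $\om^1$ and $(\tilde\Ph_0\cdot f)^*\om^2$ be normal kills the $\im\partial^*$-components on both sides, and Kostant's decomposition then determines $f$ uniquely homogeneity by homogeneity, giving the unique lift.

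The main obstacle is the iterative bookkeeping: a correction at one homogeneity can in principle perturb $\ka$ at higher, already-normalized homogeneities, and one must verify that the leading-order effect sits in exactly one homogeneity while all deeper perturbations can be absorbed at the next step. This succeeds precisely because the $|1|$-grading forces only a bounded number of homogeneities to appear, and because the controlling objects $\partial$, $\partial^*$, and the Kostant Laplacian are purely algebraic and $G_0$-equivariant, reducing both the existence and the uniqueness arguments to a finite sequence of linear-algebraic normalizations in $G_0$-representations.
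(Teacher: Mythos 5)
Your outline is essentially the construction of Section 3.1 of \cite{book}, which is precisely the reference the paper gives for Theorem \ref{thm2.4}: the paper states this theorem as a known result and does not reprove it. It is worth noting that for the partial analogues (Theorems \ref{thm2.7} and \ref{thm2.8}) the paper deliberately takes a different, more geometric route in the style of \cite{AHS2} and \cite{Cap-Schichl}: instead of forming $\Cal G=\Cal G_0\x_{G_0}P$ and then normalizing globally chosen data $(\ga,\Rho)$, it builds $\Cal G$ directly as the space of all values of compatible connections with $\partial^*$-normalized torsion attached to the points of $\Cal G_0$, and only then constructs the $\frak g_1$-component of $\om$ pointwise by a second normalization; the paper comments on this contrast in \S\ref{2.8}. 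Your choice-based approach gives a quicker existence proof but makes functoriality less transparent, which is why a separate, construction-independent uniqueness argument (as in Theorem \ref{thm2.8}) is needed anyway.

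One point needs correcting. You propose to ``cancel the $\im(\partial^*)$-part of the torsion'' by modifying $\ga$. The normalization condition is $\partial^*\o\ka=0$, i.e.\ $\ka\in\ker(\partial^*)=\ker(\square)\oplus\im(\partial^*)$, so the component that must be killed is the $\im(\partial)$-component, the complement of $\ker(\partial^*)$ in the Hodge decomposition, not the $\im(\partial^*)$-component. This is also the only component you \emph{can} change: modifying $\ga$ by $\ps\in\Ga(T^*M\otimes\frak g_0)$ changes the torsion by $\partial\ps\in\im(\partial)$. The same swap occurs in your uniqueness paragraph, where moreover the adjustment should be by the principal right action of $P_+$-valued functions rather than ``left translation''. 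With $\im(\partial^*)$ replaced by $\im(\partial)$ throughout, the argument is the standard one, and the iterative bookkeeping you worry about in your last paragraph is very short here: for a $|1|$-grading only $\ka_{-1}$ and $\ka_0$ require normalization, since $L(\La^2\frak g_{-1},\frak g_1)$ is automatically contained in $\ker(\partial^*)$, so the process terminates after two steps.
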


It is this Cartan description that we want to extend to partial $G_0$-structures.

\subsection{Partial Cartan geometries}\label{2.5}
Returning to the setting of \S \ref{2.2}, consider a manifold $M$ of dimension $k+n$
endowed with an involutive distribution $F\subset TM$ of rank $n$, a Lie group $G$
and a closed subgroup $P\subset G$ such that $\dim(G/P)=n$. Then for a principal
fiber bundle $p:\Cal G\to M$ with structure group $P$ we have the subbundle
$\widetilde F\subset T\Cal G$ from \S \ref{2.2}, so $\xi\in \widetilde{F}_u$ if and only
if $T_up(\xi)\in F_{p(u)}\subset T_{p(u)}M$. We have already observed there that
this subbundle is preserved by the principal right action, i.e.\ for any $g\in P$ and
$u\in\Cal G$, we have $T_ur^g(\widetilde F_u)=\widetilde F_{u\cdot g}$. Consequently,
for any partially defined one form $\tau\in\Ga(L(\widetilde F,V))$ with values in a
vector space $V$ and $g\in P$, we can form $(r^g)^*\tau\in \Ga(L(\widetilde
F,V))$. Having made these observations, there is an obvious partial analog of the
definition of Cartan geometries:

\begin{definition}\label{def2.5}
  A \textit{partial Cartan geometry} of type $(G,P)$ on $(M,F)$ is given by a
  principal fiber bundle $p:\Cal G\to M$ with structure group $P$ and a
  \textit{partial Cartan connection} $\om\in\Gamma(L(\widetilde F,\frak g))$ such
  that
\begin{itemize}
\item[(i)] For any $g\in P$, we have $(r^g)^*\om=\Ad(g^{-1})\o\omega$. 
\item[(ii)] For the fundamental vector field $\ze_A\in\frak X(\Cal G)$ generated by
  $A\in\frak p$, we get $\omega(\ze_A)=A$. 
  \item[(iii)] For any point $u\in\Cal G$, $\om(u):\widetilde F_u\to\frak g$ is a
    linear isomorphism.
\end{itemize}
\end{definition}

There also is a clear way how to define a morphism between two partial Cartan
geometries $(p^i:\Cal G^i\to M^i,\om^i)$ of type $(G,P)$ on $(M^i,F^i)$ for
$i=1,2$. Let $\Phi:\Cal G^1\to\Cal G^2$ be a morphism of principal fiber bundles,
whose base map $\phi$ has the property that for any $x\in M^1$ we get
$T_x\phi(F^1_x)\subset F^2_{\phi(x)}$. Then for any $u\in\Cal G^1$ we get
$T_u\Phi(\widetilde F^1_u)\subset \widetilde F^2_{\Phi(u)}$ and hence we can form
$\Phi^*\om^2\in\Ga(L(\widetilde F^1,\frak g))$. We call $\Phi$ a \textit{morphism} of
partial Cartan geometries if this condition on the base map $\phi$ is satisfied and
$\Phi^*\om^2=\om^1$. Observe that this implies that for any $u\in\Cal G$, the tangent
map $T_u\Phi$ restricts to a linear isomorphism
$\widetilde F^1_u\to\widetilde F^2_{\Phi(u)}$, but it does not even force $M^1$ and
$M^2$ to have the same dimension. As before, we recover the usual concepts of Cartan
geometries and their morphisms in the case that $\dim(M)=n$ and $F=TM$.

\medskip

The general ideas about underlying $G_0$-structures of Cartan geometries extend to
the setting of partial structures introduced here. Given a pair $(G,P)$, we can
consider the natural representation $\Adb:P\to GL(\frak g/\frak p)$ of $P$ on $\frak
g/\frak p$ induced by the restriction of the adjoint representation of $G$. Given a
partial Cartan geometry $(p:\Cal G\to M,\om)$ of type $(G,P)$ on $(M,F)$ we can thus
form the associated bundle $\Cal G\x _P(\frak g/\frak p)$. The defining properties of
the partial Cartan connection then readily imply that the map $\Cal G\x (\frak
g/\frak p)\to F$ that sends $(u,A+\frak p)$ to $T_up(\om(u)^{-1}(A))\in F_{p(u)}$
induces an isomorphism $\Cal G\x _P(\frak g/\frak p)\cong F$.

Under weak assumptions, this can be recast in the setting of \S \ref{2.2}. We just do
this in the setting of AHS-structures and remark on generalizations. So we take and
AHS-pair $(G,P)$ as in Definition \ref{def2.1} and the normal subgroup $P_+\subset P$
such that $P/P_+\cong G_0$. For a partial Cartan geometry $(p:\Cal G\to M,\om)$ on
$(M,F)$, we can then form $\Cal G/P_+$ which is a principal bundle over $M$ with
structure group $P/P_+\cong G_0$. We denote by $p_0:\Cal G_0\to M$ the corresponding
bundle projection.

\begin{prop}\label{prop2.5}
Consider a partial Cartan geometry $(p:\Cal G\to M,\om)$ of type $(G,P)$ over $(M,F)$
and the underlying bundle $p_0:\Cal G_0\to M$ from above and let $\widetilde F\subset
T\Cal G$ and $\widehat F\subset T\Cal G_0$ be the distributions determined by
$F\subset TM$. Then $\om$ descends to a strictly horizontal section
$\th\in\Ga(L(\widehat F,\frak g/\frak p))$ which makes the bundle $\Cal G_0$ into a
partial $G_0$-structure over $(M,F)$. This construction is functorial,
i.e.\ morphisms of partial Cartan geometries descend to morphisms of partial
$G_0$-structures.
\end{prop}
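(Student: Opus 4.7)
The strategy is to descend $\om$ through two successive quotients: first project the values to $\frak g/\frak p$, then push the resulting partial form down from $\Cal G$ to $\Cal G_0=\Cal G/P_+$. Set $\tau:=\pi\o\om\in\Ga(L(\widetilde F,\frak g/\frak p))$, where $\pi:\frak g\to\frak g/\frak p$ is the canonical projection. Condition (ii) of Definition \ref{def2.5} gives $\tau(\ze_A)=0$ for every $A\in\frak p$, and in particular for $A\in\frak g_1$, so $\tau$ annihilates the vertical distribution of the principal $P_+$-bundle $q:\Cal G\to\Cal G_0$. Next, $\Ad(P)$ preserves $\frak p$, so $\Adb:P\to GL(\frak g/\frak p)$ is defined; for $g=\exp(Z)\in P_+$ with $Z\in\frak g_1$ the $|1|$-grading forces $\ad(Z)$ to send $\frak g_{-1}$ into $\frak g_0\subset\frak p$, whence $\Adb(g)=\id_{\frak g/\frak p}$. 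Combined with (i) of Definition \ref{def2.5}, this gives $(r^g)^*\tau=\tau$ for all $g\in P_+$. Hence $\tau$ is $P_+$-basic and descends uniquely to a section $\th\in\Ga(L(\widehat F,\frak g/\frak p))$ with $\tau=q^*\th$ in the obvious sense.

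I then verify the two conditions of Proposition \ref{prop2.2}(1). For strict horizontality, fix $u_0\in\Cal G_0$ and a lift $u\in\Cal G$. The images $T_uq(\ze_A(u))$ with $A\in\frak p$ span $V_{u_0}\Cal G_0$ and lie in $\ker\th(u_0)$ by (ii). Conversely, if $\th(u_0)(\bar\xi)=0$ and $\xi\in\widetilde F_u$ lifts $\bar\xi$, then $\om(u)(\xi)=:A\in\frak p$; condition (iii) makes $\om(u)$ a linear isomorphism on $\widetilde F_u$, and since $\om(u)(\ze_A(u))=A$ we conclude $\xi=\ze_A(u)$, so $\bar\xi\in V_{u_0}\Cal G_0$. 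For equivariance, the descent of the $P$-action on $\Cal G$ is the $G_0$-action on $\Cal G_0$ via the isomorphism $P/P_+\cong G_0$ from \S \ref{2.1}, and the induced representation of $G_0$ on $\frak g/\frak p\cong\frak g_{-1}$ coincides with the one from \S \ref{2.1}; (i) of Definition \ref{def2.5} therefore descends to $G_0$-equivariance of $\th$. By Proposition \ref{prop2.2}(1), $(\Cal G_0\to M,\th)$ is a partial $G_0$-structure on $(M,F)$.

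For functoriality, any morphism $\Ph:\Cal G^1\to\Cal G^2$ of partial Cartan geometries is $P$-equivariant and so descends to a principal $G_0$-bundle homomorphism $\Ph_0:\Cal G_0^1\to\Cal G_0^2$ over the same base map, which inherits the required compatibility with $F^1,F^2$; the identity $\Ph^*\om^2=\om^1$ projects to $\Ph^*\tau^2=\tau^1$ and then descends to $\Ph_0^*\th^2=\th^1$, which by Proposition \ref{prop2.2}(2) is exactly the morphism condition for the underlying partial $G_0$-structures. The only delicate step is the strict horizontality; it relies crucially on (iii) of Definition \ref{def2.5} constraining $\om(u)$ on $\widetilde F_u$ (rather than all of $T_u\Cal G$), together with the vertical normalization (ii), so that at each $u$ the kernel of $\pi\o\om$ inside $\widetilde F_u$ is precisely the $P_+$-vertical direction—which is what makes the descent to $\Cal G_0$ land in the vertical subbundle of $p_0$.
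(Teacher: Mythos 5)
Your argument is correct and follows essentially the same route as the paper: project the values of $\om$ to $\frak g/\frak p$, observe that the result is $P_+$-invariant (since $P_+\subset\ker(\Adb)$) and horizontal for $q:\Cal G\to\Cal G_0$, descend, and then verify strict horizontality, $G_0$-equivariance and functoriality. Your verification of strict horizontality via injectivity of $\om(u)$ on $\widetilde F_u$ and the fundamental vector fields is a slightly more explicit version of the paper's surjectivity-plus-kernel argument, but the substance is identical.
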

\begin{proof}
Projecting the values of $\om$ to $\frak g/\frak p$, we obtain a smooth section
$\tilde\th$ of the bundle $L(\widetilde F,\frak g/\frak p)\to\Cal G$ which by
construction is $P$-equivariant and vanishes on the vertical subbundle $V\Cal
G=\ker(Tp)$. It is a classical result that the natural projection $q:\Cal G\to\Cal
G_0$ is a principal fiber bundle with structure group $P_+$.  Since for $Z\in\frak
g_1$, the map $\ad(Z)$ maps $\frak g$ to $\frak p$, we conclude that
$\exp(Z)\subset\ker(\Adb)$, so $P_+$ is contained in $\ker(\Adb)$. Hence $\tilde\th$
is $P_+$ invariant and of course it is horizontal with respect to the projection to
$\Cal G_0$. Moreover $T_q$ maps $\widetilde F$ onto $\widehat F$, so we conclude that
$\tilde\th$ descends to a smooth section $\th$ of $L(\widehat F,\frak g/\frak p)\to
G_0$.

The $P$-action on $\widetilde F$ descends to the natural action of $P/P_+=G_0$ on
$\widehat F$ and the representation $\Adb$ also descends to $P/P_+$, so
$P$-equivariancy of $\tilde\th$ implies that $\th$ is $G_0$-equivariant. On the other
hand, for $u\in \Cal G$, $\tilde\th(u)$ is surjective and vanishes on the vertical
subspace $V_u\Cal G$ of $p:\Cal G\to M$. This readily implies that $\th$ is strictly
horizontal and hence makes $p_0:\Cal G_0\to M$ into a partial $G_0$-structure by
Proposition \ref{prop2.2}.

Concerning functoriality, it is clear that a principal bundle morphism $\Phi:\Cal
G^1\to\Cal G^2$ descends to a principal bundle morphism $\underline{\Phi}:\Cal
G^1/P_+\to\Cal G^2/P_+$ with the same base map. If $\Phi^*\om^2=\om^1$ then the
construction above immediately implies that $\underline{\Phi}^*\th^2=\th^1$, so this
is a morphism of partial $G_0$-structures. 
\end{proof}

\begin{remark}\label{rem2.5}
  It is a special feature of the case of AHS-structures that there is a preferred
  subgroup $G_0$ which can serve as the structure group of the underlying
  structure. It follows from the structure theory that in the AHS-case the image
  $\Adb(P)\subset GL(\frak g/\frak p)$ is a closed subgroup $\underline{G}_0$ and
  that $\Adb$ descends to a finite covering $G_0\to\underline{G_0}$.

  For general pairs $(G,P)$, one has to assume that $G_0:=\Adb(P)\subset
GL(\frak g/\frak p)$ is a closed subgroup and define $K:=\ker(\Adb)$. Then one
defines $\Cal G_0:=\Cal G/K$ and proves similarly as in the proof of Proposition
\ref{prop2.5} that one can descend $\om$ to a partially defined one-form with values
in $\frak g/\frak p$ that makes $\Cal G_0$ into a partial $G_0$-structure. Of course,
functoriality of the construction of the underlying $G_0$-structure continues to hold
in this case.
\end{remark}

\subsection{Curvature and normality}\label{2.6}
Let $M$ be a smooth manifold of dimension $n+k$, $F\subset TM$ an involutive
distribution of rank $n$, and $V$ a finite dimensional vector space. Then we refer to
sections of the bundle $L(\La^r F,V)$ as partial $V$-valued $r$-forms and denote the
space of such forms as $\Om^r_F(M,V)$. In this language, the form $\th$ from
Proposition \ref{prop2.2} and a Cartan connection are partial $1$-forms with values
in $\Bbb R^n$ and in $\frak g$, respectively.

Now we first observe that involutivity of $F$ implies that there is a well-defined
exterior derivative $d:\Om^r_F(M,V)\to \Om^{r+1}_F(M,V)$ defined by the usual global
formula. Alternatively, one can extend $\al\in\Om^r_F(M,V)$ to a $V$-valued $r$-form
$\tilde\al$ and then observe that the restriction of $d\tilde\al$ to entries from $F$
depends only on $\al$ and not on the chosen extension. Second, we observe that for
any principal bundle $p:\Cal G\to M$ over $M$, the lifted distribution $\widetilde
F\subset T\Cal G$ admits local frames that consist of projectable vector fields,
which implies that $\widetilde F$ is involutive, too. Hence the concept of partial
$V$-valued forms makes sense on the total space of $\Cal G$ and we also have an
exterior derivative for these forms.

\begin{definition}\label{def2.6.1}
Let $(p:\Cal G\to M,\om)$ be a partial Cartan geometry of type $(G,P)$ on
$(M,F)$. The \textit{curvature} $K\in\Om^2_{\widetilde F}(\Cal G,\frak g)$ of $\om$ is defined
by $$K(u)(X,Y)=d\om(u)(X,Y)+[\om(u)(X),\om(u)(Y)]$$ for $u\in\Cal G$ and $X,Y\in
T_u\Cal G$.
\end{definition}

From this definition, it follows exactly as for usual Cartan geometries that $K$ is
$P$-equivariant, i.e.\ $(r^g)^*K=\Ad(g^{-1})\o K$ for any $g\in P$, and horizontal,
i.e.\ it vanishes upon insertion of one tangent vector which is vertical for $p:\Cal
G\to M$. Thus one can equivalently encode $K$ via the \textit{curvature function}
$\ka:\Cal G\to L(\La^2\frak g/\frak p,\frak g)$, which is characterized by
\begin{equation}
\ka(u)(X+\frak p,Y+\frak p)=K(\om(u)^{-1}(X),\om(u)^{-1}(Y)). 
\end{equation}
This is well defined since $K$ is horizontal and it is $P$-equivariant by
construction. Observe that we can project the values of $\ka(u)$ to $\frak g/\frak p$
to obtain a function $\tau:\Cal G\to L(\La^2\frak g/\frak p,\frak g/\frak p)$. This
is also $P$-equivariant and hence defines an element of $\Om^2_F(M,F)$, which is
called the \textit{torsion} of the partial Cartan geometry $(p:\Cal G\to M,\om)$.

\medskip

For the case of AHS-structures, this readily implies that the standard notion of
normality continues to make sense in our setting. There is a linear map
$\partial^*:L(\La^2\frak g/\frak p,\frak g)\to L(\frak g/\frak p)$ which is defined
using the fact that $\frak g/\frak p$ and $\frak g_1$ are dual representations of $P$
via the Killing form of $\frak g$, see \cite{AHS2}. For $\ph\in L(\La^2\frak g/\frak
p,\frak g)$ and $X\in\frak g/\frak p$ one defines
\begin{equation}\label{codiff}
  (\partial^*\ph)(X):=\sum_i[Z^i,\ph(X_i,X)],
\end{equation}
where the $Z^i$ form a basis for $\frak g_1$ and the $X_i$ are the elements of the
dual basis of $\frak g/\frak p$.

\begin{definition}\label{def2.6.2}
  Let $(G,P)$ be an AHS-pair as in Definition \ref{def2.1}. Then a partial Cartan
  geometry $(p:\Cal G\to M,\om)$ of type $(G,P)$ is called \textit{normal} if it
  curvature function $\ka$ has the property that $\partial^*\o \ka=0$.
\end{definition}

The reason why $\partial^*$ gives rise to a good normalization condition are its
algebraic properties. First, since $\frak g/\frak p$ and $\frak g_1$ are dual
representations of $P$, the definition readily implies that $\partial^*$ is
$P$-equivariant. In particular, its kernel is a $P$-invariant subspace of
$L(\La^2\frak g/\frak p,\frak g)$. Second, the definition immediately implies that if
$\ph$ has values in $\frak g_i\subset\frak g$, then $\partial^*\ph$ has values in
$\frak g_{i+1}$. (This property is most naturally phrased as $\partial^*$ preserving
homogeneities, but we will not use this concept here.)

On the other hand, we can identify $\frak g/\frak p$ as a vector space with the
abelian Lie subalgebra $\frak g_{-1}\subset\frak g$. Consequently, there is a Lie
algebra cohomology differential $\partial:L(\frak g_{-1},\frak g)\to L(\La^2\frak
g_{-1},\frak g)$. Since $\frak g_{-1}$ is abelian, the formula for this differential
is just $\partial\ps(X,Y)=[X,\ps(Y)]-[Y,\ps(X)]$. Observe that if $\ps$ has values in
$\frak g_i$, then $\partial\ps$ has values in $\frak g_{i-1}$. In particular, we get
a restriction $\partial:L(\frak g_{-1},\frak g_0)\to L(\La^2\frak g_{-1},\frak
g_{-1})$. Viewing $\frak g_0$ as a Lie subalgebra of $\frak{gl}(\frak g_{-1})$ via
the adjoint action this is exactly the Spencer differential, which plays a crucial
role in the theory of $G_0$-structures.

The relevant algebraic properties then follow from the fact that $\partial$ and
$\partial^*$ are adjoint with respect to an appropriate inner product, see
\cite{Ochiai} or Section 3.3 of \cite{book} for a much more general version. In
addition, one needs information on the Lie algebra cohomology of $\frak g_{-1}$ with
coefficients in $\frak g$, which are obtained as a very special case of Kostant's
version of the Bott-Borel-Weil theorem from \cite{Kostant}, see Proposition 3.3.7 of
\cite{book} (taking into account that we excluded some pairs according to Definition
\ref{def2.1}). We will need the following facts that can be found in these
references.

\begin{prop}\label{prop2.6}
  Let $(G,P)$ be an AHS-pair as in Definition \ref{def2.1} with Lie algebras
  $\frak p\subset\frak g$.
  
  (1) For any $i=-1,0,1$, $\ker(\partial^*)$ and $\im(\partial)$ are complementary
  subspaces of $L(\La^2\frak g/\frak p,\frak g_i)$.

  (2) If $\ps\in L(\frak g_{-1},\frak g_0)$ satisfies $\partial\ps=0$, then there is
  a unique element $Z\in \frak g_1$ such that $\ps(X)=[Z,X]$ for any $X\in\frak
  g_{-1}$.

  (3) $\partial:L(\frak g_{-1},\frak g_1)\to L(\La^2\frak g_{-1},\frak g_0)$ is
  injective. 
\end{prop}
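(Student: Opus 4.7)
My plan is to reduce all three statements to facts about the Kostant cohomology of the abelian Lie algebra $\frak g_{-1}$ with coefficients in the adjoint representation, as developed in \cite{Ochiai} and Section 3.3 of \cite{book}. The chain spaces $L(\La^r\frak g_{-1},\frak g_i)$ carry a bigrading by form degree $r$ and value degree $i$, and both $\partial$ and $\partial^*$ preserve the \emph{homogeneity} $r+i$; each claim can therefore be analyzed within a fixed homogeneity.

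For (1), the essential input is the adjointness of $\partial$ and $\partial^*$: starting from an $\Ad$-invariant inner product on $\frak g$ coming from a Cartan involution, together with the Killing-form identification $\frak g_1\cong\frak g_{-1}^*$, a direct computation shows that the codifferential \eqref{codiff} is the algebraic adjoint of the Chevalley--Eilenberg differential $\partial$. Granted this, the algebraic Hodge decomposition
$$L(\La^2\frak g_{-1},\frak g_i)=\Cal H_i\oplus\im\partial\oplus\im\partial^*,$$
with $\Cal H_i=\ker\partial\cap\ker\partial^*$, is standard. A short orthogonality argument (any element of $\im\partial\cap\ker\partial^*$ is harmonic but also orthogonal to $\Cal H_i$, hence zero) gives that $\ker\partial^*=\Cal H_i\oplus\im\partial^*$ and $\im\partial$ are complementary in $L(\La^2\frak g_{-1},\frak g_i)$.

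For (2), the cocycle condition $\partial\ps=0$ reads $[X,\ps(Y)]=[Y,\ps(X)]$, and Jacobi together with abelianness of $\frak g_{-1}$ shows that every $\ps(X)=[Z,X]$ with $Z\in\frak g_1$ satisfies this. The content of the claim is thus the vanishing of $H^1(\frak g_{-1},\frak g)$ in homogeneity $1$ with values in $\frak g_0$, which is the output of Proposition 3.3.7 of \cite{book} (Kostant's theorem) for AHS-pairs outside the excluded $\frak{sl}_2$ and projective cases, where the harmonic representatives at homogeneity $1$ lie in bidegrees different from $L(\frak g_{-1},\frak g_0)$. Uniqueness of $Z$ follows from non-degeneracy of the bracket $\frak g_{-1}\x\frak g_1\to\frak g_0$ in its $\frak g_1$ argument: if $[Z,\frak g_{-1}]=0$, Jacobi yields $[\ad(\frak g_0)\cdot Z,\frak g_{-1}]=0$ as well, so irreducibility of $\frak g_1$ as a $\frak g_0$-module would force $[\frak g_1,\frak g_{-1}]=0$, making $\frak g_{-1}\oplus\frak g_1$ a proper ideal of $\frak g$, contradicting simplicity unless $Z=0$.

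For (3), the kernel of $\partial\colon L(\frak g_{-1},\frak g_1)\to L(\La^2\frak g_{-1},\frak g_0)$ sits at homogeneity $2$ in form degree $1$. Its potential coboundary predecessor lives in $\frak g_2$, which vanishes in the $|1|$-grading, so this kernel coincides with the cohomology group $H^1(\frak g_{-1},\frak g)$ at that bidegree; Kostant's theorem again shows this vanishes. Equivalently, the kernel agrees with the first Spencer prolongation $\frak g_1^{(1)}=\frak g_2=0$. The main obstacle in the whole argument is the clean verification of adjointness of $\partial$ and $\partial^*$, together with the correct identification of which Weyl-group summands in Kostant's theorem contribute to the low-homogeneity pieces of $H^1(\frak g_{-1},\frak g)$ relevant for (2) and (3); once those ingredients are in place, the remaining algebraic assertions follow by routine manipulations.
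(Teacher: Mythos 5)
Your proposal is correct and follows essentially the route the paper itself indicates: the paper does not prove Proposition \ref{prop2.6} but derives it from exactly the two inputs you use, namely the adjointness of $\partial$ and $\partial^*$ (hence the algebraic Hodge decomposition) from \cite{Ochiai} and Section 3.3 of \cite{book}, and the vanishing of $H^1(\frak g_{-1},\frak g)$ in positive homogeneities from Kostant's theorem (Proposition 3.3.7 of \cite{book}), with the projective and $\frak{sl}_2$ cases excluded for precisely the reason you identify. Your supplementary arguments (the orthogonality argument for (1), the Jacobi-identity verification and the irreducibility/simplicity argument for uniqueness in (2), and the observation that $\frak g_2=0$ kills the coboundaries in (3)) are all sound.
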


\subsection{Existence of normal partial Cartan geometries}\label{2.7}
In order to prove that normal partial Cartan geometries are an equivalent description
of partial AHS-structures, we first prove that any partial AHS-structure is induced
by some normal Cartan geometry.

\begin{thm}\label{thm2.7}
  Let $(G,P)$ be an AHS-pair as in Definition \ref{def2.1}, $G_0\subset P$ the
  structure group of the corresponding AHS-structure, and consider a partial
$G_0$-structure $(p_0:\Cal G_0\to M,\th)$ over $(M,F)$. Then there is a normal
partial Cartan geometry $(p:\Cal G\to M,\om)$, which has $(p_0:\Cal G_0\to M,\th)$ as
its underlying $G_0$-structure.
\end{thm}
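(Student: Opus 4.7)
The plan is to adapt the classical AHS construction of a canonical normal Cartan connection (Section 3.1 of \cite{book}) to the partial setting, by systematically replacing ordinary differential forms by the partial forms on $\widetilde F$ introduced in \S \ref{2.6}. The three stages are: build a principal $P$-bundle extending $\Cal G_0$, write down a first candidate partial Cartan connection using $\th$ together with any choice of partial principal connection, and correct this candidate iteratively by homogeneity until the normalization $\partial^*\o\ka=0$ is achieved.

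For the first stage I would take $p:\Cal G:=\Cal G_0\x_{G_0}P\to M$, with $G_0$ acting on $P$ from the left. The semi-direct decomposition $P=G_0\ltimes P_+$ yields a canonical embedding $j:\Cal G_0\into\Cal G$, $u_0\mapsto [u_0,e]$, which identifies $\Cal G_0$ with $\Cal G/P_+$. Next I would choose an arbitrary partial principal $G_0$-connection $\ga\in\Ga(L(\widehat F,\frak g_0))$ on $\Cal G_0$; such a $\ga$ exists by a partition-of-unity argument, since locally $F$ arises from a submersion $f:U\to N$ and any ordinary principal $G_0$-connection on $\Cal G_0|_U$ restricts to a partial connection on $\widehat F|_{\Cal G_0|_U}$. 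On $j(\Cal G_0)$ I would then prescribe $j^*\tilde\om=\th+\ga$, with values in $\frak g_{-1}\oplus\frak g_0$, supplement this by the reproducing property on fundamental vector fields of $\frak p$, and extend uniquely to all of $\widetilde F\subset T\Cal G$ by $P$-equivariance. Properties (i)--(iii) of Definition \ref{def2.5} are then immediate; (iii) in particular follows from strict horizontality of $\th$, the reproducing property of $\ga$, and a dimension count on each fibre of $\widetilde F$.

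For the normalization, the involutivity of $\widetilde F$ makes the curvature formula of Definition \ref{def2.6.1} well defined on partial forms and yields a curvature function $\tilde\ka=\tilde\ka_{-1}+\tilde\ka_0+\tilde\ka_1$ decomposed according to the grading of $\frak g$. I would normalize in two steps. First, replacing $\ga$ by $\ga+\Ps$ for a $G_0$-equivariant $\Ps\in\Ga(L(\widehat F,\frak g_0))$ changes $\tilde\ka_{-1}$ precisely by $\partial$ of the equivariant function corresponding to $\Ps$: the only component of the curvature-change equation landing in $\frak g_{-1}$ is $[\th,\Ps]$, which evaluates to $\partial\ph$ where $\ph$ is the function associated to $\Ps$ via $\th$. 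Proposition \ref{prop2.6}(1) at $i=-1$ then allows us to arrange $\partial^*\tilde\ka_{-1}=0$. Second, adding to $\tilde\om$ a $P$-equivariant $\Ph\in\Ga(L(\widetilde F,\frak g_1))$ that vanishes on the vertical bundle of $\Cal G\to\Cal G_0$ alters $\tilde\ka_0$ by $\partial$ of the corresponding function (plus terms of strictly higher homogeneity), so Proposition \ref{prop2.6}(1) at $i=0$ produces a correction achieving $\partial^*\tilde\ka_0=0$ without disturbing $\tilde\ka_{-1}$. No further step is required, because $\partial^*\tilde\ka_1$ takes values in $\frak g_2=0$. Neither modification alters the projection of $\tilde\om$ to $\frak g/\frak p$ at the level of $\Cal G_0$, so by Proposition \ref{prop2.5} the resulting normal partial Cartan geometry recovers the original partial $G_0$-structure.

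The main technical point is to confirm that each calculation transfers from the full to the partial setting. This rests on two facts from \S \ref{2.6}: the involutivity of $\widetilde F$ makes $d$ well defined on $\Om^*_p(\Cal G,\frak g)$, so the curvature and all the algebraic identities relating it to changes of connection behave exactly as in the full-rank case; and partial principal connections exist via partitions of unity. With these in place, the algebraic input for the iterative normalization is supplied entirely by Proposition \ref{prop2.6}(1) at homogeneities $-1$ and $0$, and the argument proceeds essentially verbatim as in the standard AHS proof.
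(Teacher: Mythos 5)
Your proposal is correct in substance, but it follows a genuinely different route from the paper. The paper constructs $\Cal G$ as a prolongation: the fibre over $u_0\in\Cal G_0$ consists of all extensions $\th(u_0)\oplus\ph$ of the soldering form whose associated torsion lies in $\ker(\partial^*)$; the $P$-action, the principal bundle structure and the tautological form are then built on this space, and the $\frak g_1$-component of $\om$ is determined pointwise by a uniqueness argument using parts (1)--(3) of Proposition \ref{prop2.6}. This makes the construction choice-free and manifestly functorial (see Remark \ref{rem2.7}), at the price of having to verify freeness and transitivity of the $P$-action and the existence of smooth local sections. You instead take the extended bundle $\Cal G_0\x_{G_0}P$, transplant $\th$ plus a chosen partial principal connection $\ga$ along the canonical reduction $j:\Cal G_0\into\Cal G$, extend $P$-equivariantly, and then normalize by two corrections using Proposition \ref{prop2.6}(1) at homogeneities $-1$ and $0$. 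This is exactly the ``make choices and exploit the algebra'' construction that the paper itself mentions in \S\ref{2.8} as an alternative (parallel to Section 3.1 of \cite{book}); it is shorter and needs only part (1) of Proposition \ref{prop2.6}, but it depends on the choices of $\ga$ and of equivariant splittings, so canonicity is not visible from the construction and has to be recovered from the uniqueness statement of Theorem \ref{thm2.8}.

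Two points in your normalization step need to be tightened. First, the $\frak g_1$-valued correction $\Ph$ must vanish on the vertical bundle of $p:\Cal G\to M$, not merely on that of $\Cal G\to\Cal G_0$; otherwise $\tilde\om+\Ph$ no longer reproduces the generators of fundamental vector fields for $A\in\frak g_0$, and $P$-equivariance of $\Ph$ alone does not force $\Ph(\ze_A)=0$. Equivalently, $\Ph$ should be of the form $\psi\o\tilde\om_{-1}$ for a function $\psi:\Cal G\to L(\frak g_{-1},\frak g_1)$. Second, Proposition \ref{prop2.6}(1) is a pointwise statement, and the component $\tilde\ka_0$ is only $G_0$-equivariant (not $P$-equivariant) as long as $\tilde\ka_{-1}\neq 0$, so one cannot directly read off a globally defined $P$-equivariant correction from it. The standard fix, which your setup accommodates, is to determine $\psi$ $G_0$-equivariantly on the reduction $j(\Cal G_0)$ only (where $P_+$ acts trivially on the target $L(\frak g_{-1},\frak g_1)$, so the $P$-equivariant extension is unproblematic), and then to observe that $\partial^*\tilde\ka$ of the corrected connection is a $P$-equivariant function that vanishes along $j(\Cal G_0)$, hence vanishes identically. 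With these adjustments the argument goes through.
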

\begin{proof}
We use a construction similar to the one in \cite{AHS2} respectively the one in
\cite{Cap-Schichl} (in the special case we consider here). We want to point out in
particular that all the technical verifications we need are the same for partial
Cartan geometries as for usual Cartan geometries, so we keep them rather short.

\textbf{Step 1: Construct the space $\Cal G$}: For a point $u_0\in\Cal G_0$, we
consider $\th(u_0):\widetilde{F}_{u_0}\to\frak g_{-1}$ and linear maps
$\ph:\widetilde{F}_{u_0}\to \frak g_0$, which extend $\th(u_0)$ to a linear map
$\widetilde{F}_{u_0}\to\frak g_{-1}\oplus\frak g_0$ requiring two more
conditions. First, we require that for the fundamental vector field $\ze_A$ generated
by $A\in\frak g_0$, we get $\phi(\ze_A(u_0))=A$. This in particular implies that
$\th(u_0)\oplus\ph$ is surjective and hence defines a linear isomorphism
$\widetilde{F}_{u_0}\to\frak g_{-1}\oplus\frak g_0$. Second, for $\xi,\eta\in
\widetilde{F}_{u_0}\Cal G_0$, we consider
\begin{equation}\label{tors}
d\th(u_0)(\xi,\eta)+\ph(\xi)(\th(u_0)(\eta))-\phi(\eta)(\th(u_0)(\xi)).
\end{equation}
If $\xi=\ze_A(u_0)$ for some $A\in\frak g_0$, then this reduces to
$d\th(u_0)(\ze_A,\eta)+[A,\th(u_0)(\eta)]$. This coincides with
$(\Cal L_{\ze_A}\th-\ad(A)\o\th)(u_0)(\eta)$ and hence vanishes by equivariancy of
$\th$. Thus there is a map $\tau_\phi:\La^2\frak g_{-1}\to\frak g_{-1}$ such that the
expression in \eqref{tors} equals $\tau_\phi(\th(u_0)(\xi),\th(u_0)(\eta))$, and the
second condition we require is that $\partial^*(\tau_\phi)=0$. For fixed $u_0$, we
can view the isomorphisms $\th(u_0)\oplus\ph$ for which both conditions are satisfied
as an affine subspace of the fiber of the vector bundle
$L(\widetilde{F},\frak g_{-1}\oplus\frak g_0)\to\Cal G_0$ over $u_0$. The union of
all these subspaces is easily seen to define a smooth submanifold of this total
space, which provides us which projections $q:\Cal G\to\Cal G_0$ and $p:\Cal G\to
M$. In what follows we will denote a typical element of $\Cal G$ as
$u=\th(u_0)\oplus\phi$.

\textbf{Step 2: Construct a $P$-action on $\Cal G$}: We can construct a natural right
action of $P$ on $L(\widetilde{F},\frak g_{-1}\oplus\frak g_0)$. On the one hand, we
get a natural representation of $P$ on $\frak g_{-1}\oplus\frak g_0$ by identifying
this space with $\frak g/\frak g_1$ and observing that $\frak g_1$ is invariant under
the adjoint action of $P$. With a slight abuse of notation we also denote this
representation by $\Adb$.  On the other hand, as noted in \S \ref{2.1}, $P/P_+\cong
G_0$ and for $g\in P$, we put $g_0:=gP_+\in G_0$. This justifies our abuse of notation
since viewing $\frak g_{-1}$ as $\frak g/\frak p$, for $g\in P$, the map $\Adb(g)$
descends to $\frak g_{-1}$ and since $P_+$ acts trivially on $\frak g/\frak p$, it
coincides with $\Adb(g_0)$ there.

Now for a linear map $\psi:\widetilde{F}_{u_0}\to \frak g_{-1}\oplus\frak g_0$, we define
$\psi\cdot g:\widetilde{F}_{u_0\cdot g_0}\to\frak g_{-1}\oplus\frak g_0$ by
\begin{equation}\label{act-def}
  \psi\cdot g:=\Adb(g^{-1})\o \psi\o Tr^{g_0^{-1}}.
\end{equation}
Since $\widetilde{F}$ is invariant under the principal right action, this clearly
defines a right action and we claim that the subset $\Cal G$ is $P$-invariant. First
observe that equivariancy of $\th$ reads as $\th\o Tr^{g_0^{-1}}=\Adb(g_0)\o
\th$. Together with our above observation, this readily implies that for
$u=\th(u_0)\oplus\phi \in\Cal G$ the $\frak g_{-1}$-component of $u\cdot g$ coincides
with $\th(u_0\cdot g_0)$. For fundamental vector fields, one has the equivariancy
property $Tr^{g^{-1}_0}(\ze_A(u_0\cdot g_0))=\ze_{\Ad(g_0)(A)}(u_0)$. Since $P_+$
acts trivially on $\frak p/\frak g_1$, we conclude as before that on
$\{0\}\oplus\frak g_0$, the map $\Adb(g^{-1})$ coincides with $\Ad(g_0^{-1})$ and
hence $u\cdot g$ reproduces the generators of fundamental vector fields.

To verify the last property for $u\cdot g$, recall from \S \ref{2.1} that $P$ is the
semi-direct product of $G_0$ and $P_+$, so we can write $g=g_0\exp(Z)$ for a unique
element $Z\in\frak g_1$, see also Theorem 3.1.3 of \cite{book}. Hence
$\Adb(g^{-1})=\Adb(\exp(-Z))\o\Adb(g_0^{-1})$ and since $\ad(Z)^2=0$ we conclude that
for $u=\th(u_0)\oplus\ph\in\Cal G$, the $\frak g_0$-component of $u\cdot g$ maps
$\xi\in \widetilde{F}_{u_0\cdot g_0}$ to
$$
\Ad(g_0^{-1})(\phi(Tr^{g_0^{-1}}(\xi))) -[Z,\Ad(g_0^{-1})(\th(u_0)(
  Tr^{g_0^{-1}}(\xi)))].
$$
Now if we consider \eqref{tors} for $u\cdot g$, then the terms involving $Z$ do not
give any contribution, since for $X,Y\in\frak g_{-1}$, we get
$0=[Z,[X,Y]]=[[Z,X],Y]+[X,[Z,Y]]$. Using this and that equivariancy of $\th$ implies
equivariancy of $d\th$, a short computation shows that $\tau_{u\cdot g}=g^{-1}_0\cdot
\tau_u$, where the dot denotes the canonical action of $G_0$ on $L(\La^2\frak
g_{-1},\frak g_{-1})$ induced by $\Adb$. But then equivariancy of $\partial^*$
implies that $\partial^*(\tau_{u\cdot g})=0$, which completes the proof that $\Cal G$
is invariant under the $P$-action we have constructed.

\textbf{Step 3: Show that $\Cal G\to M$ is a principal $P$-bundle}: We next show that
the $P$-action on $\Cal G$ is free and transitive on the fibers of $p:\Cal G\to
M$. To prove freeness, assume that $u\cdot g=u$ for some $u\in\Cal G$. Then of course
we must have $g_0=e$, so $g=\exp(Z)$ for some $Z\in\frak g_1$ and $\ad(-Z)$ has to 
vanish on $\frak g_{-1}$. But it is well known that $[Z,\frak g_{-1}]=\{0\}$ implies
$Z=0$, see Proposition 3.1.2 of \cite{book}. Next, we know that $G_0$ acts
transitively on the fibers of $\Cal G_0\to M$, so we can prove transitivity of the
action on the fibers by showing that $P$ acts transitively on the fibers of $q:\Cal
G\to \Cal G_0$. If $q(u)=q(\tilde u)=u_0$, we see that $\tilde u-u$ has to have
values in $\frak g_0$ and has to vanish on vertical vector fields. Hence there is a
linear map $f:\frak g_{-1}\to\frak g_0$ such that $\tilde
u(\xi)=u(\xi)+f(\th(u_0)(\xi))$ for each $\xi\in T_{u_0}\Cal G_0$. But looking at
equation \eqref{tors} we readily see that $\tau_{\tilde u}=\tau_u+\partial f$. Since
$u,\tilde u\in\Cal G$, this implies that $\partial^*\partial f=0$. By part (1) of
Proposition \ref{prop2.6}, this implies $\partial f=0$ which by part (2) of that
Proposition implies that $f=\ad(Z)$ for some $Z\in\frak g_1$. Hence we obtain $\tilde
u=u\cdot\exp(-Z)$ and transitivity of the action on the fibers is proved.

To prove that $p:\Cal G\to M$ is a $P$-principal bundle, it thus remains to construct
local smooth sections of $L(\widetilde{F},\frak g_{-1}\oplus\frak g_0)$ which have
values in $\Cal G$. Indeed, if $\si:U\to\Cal G$ is such a section, then $(x,g)\mapsto
\si(x)\cdot g$ defines a diffeomorphism $U\x P\to p^{-1}(U)$, whose inverse is a
principal bundle chart. To do this, we choose a principal connection $\ga$ on $\Cal
G_0$ and a local smooth section $\si_0:U\to\Cal G_0$ of $p_0:\Cal G_0\to M$. Then
$\hat\si(x):=\th(\si_0(x))\oplus\ga(\si_0(x))$ is an element of
$L(\widetilde{F},\frak g_{-1}\oplus\frak g_0)$ over $\si_0(x)\in\Cal G_0$. Plugging
this into \eqref{tors}, we conclude that $x\mapsto \tau_{\hat\si(x)}$ defines a
smooth map $U\to L(\La^2\frak g_{-1},\frak g_{-1})$. By part (1) of Proposition
\ref{prop2.6}, the target space splits as
$\ker(\partial^*)\oplus\im(\partial)$. Choosing left inverse of the surjective linear
map $\partial:L(\frak g_{-1},\frak g_0)\to \im(\partial)$ we obtain a smooth map
$\psi:U\to L(\frak g_{-1},\frak g_0)$ such that
$\tau_{\hat\si(x)}-\partial(\psi(x))\in\ker(\partial^*)$ for any $x\in U$.  But
\eqref{tors} immediately implies that this equals $\tau_{\si(x)}$, where
$$
\si(x):=\th(\si_0(x))\oplus \left(\ga(\si_0(x))-\psi(x)\o\th(\si_0(x))\right). 
$$
Since this is evidently smooth, we have constructed a local smooth section of $\Cal
G$ as required. 

\textbf{Step 4: Construct a normal Cartan connection}: From our construction of the
principal bundle $p:\Cal G\to M$, we conclude that there is a canonical section
$$
\hat\th=\hat\th_{-1}\oplus\hat\th_0 \in\Ga(L(\widetilde F,\frak g_{-1}\oplus\frak
g_0)\to\Cal G):
$$
Since a point $u\in\Cal G$ with $q(u)=u_0\in\Cal G_0$ is a linear isomorphism
$T_{u_0}\Cal G_0\supset\widetilde{F}_{u_0}\to\frak g_{-1}\oplus\frak g_0$ we can define
$\hat\th(u)(\xi):=u(T_uq(\xi))$ for $\xi\in T_u\Cal G$. This is obviously smooth and
the definition of the $P$-action on $\Cal G$ is exactly chosen in such a way that
$(r^g)^*\hat\th=\Adb(g^{-1})\o\hat\th$ so $\hat\th$ is equivariant for the
$P$-representation $\Adb$ on $\frak g_{-1}\oplus\frak g_0\cong \frak g/\frak g_1$.

The final step now is to extend, for a fixed point $u\in\Cal G$, the map $\hat\th(u)$
to a linear isomorphism $T_u\Cal G\to\frak g$ in such a way that the normalization
condition is satisfied. So given $u$, we want to define $\al:T_u\Cal
G\supset\widetilde{F}_u\to\frak g_1$ such that $\al(\ze_A(u))=0$ for $A\in\frak g_0$,
and $\al(\ze_Z(u))=Z$ for $Z\in\frak g_1$. In addition, similarly to \eqref{tors}, we
consider 
\begin{equation}\label{curv}
  d\hat\th_0(u)(\xi,\eta)+[\hat\th_0(u)(\xi),\hat\th_0(u)(\eta)]+
  [\al(\xi),\hat\th_{-1}(u)(\eta)]+[\hat\th_{-1}(u)(\xi),\al(\eta)]\in\frak g_0  
\end{equation}
for $\xi,\eta\in T_u\Cal G$. Inserting $\xi=\ze_B(u)$ for $B\in\frak g_0\oplus\frak
g_1$, we see that this reduces to the $\frak g_0$-component of $(\Cal
L_{\ze_B}\hat\th+\adb(B)\o\hat\th)(\eta)$, which vanishes by equivariancy of
$\hat\th$, As before, this shows that there is a map $\rho_\al\in L(\La^2\frak
g_{-1},\frak g_0)$ such that \eqref{curv} equals
$\rho_\al(\hat\th_{-1}(u)(\xi),\hat\th_{-1}(u)(\eta))$. We claim, there there is a
unique map $\al$ as above for which $\partial^*\rho_\al=0$. Indeed, by part (1) of
Proposition \ref{prop2.6}, $L(\La^2\frak g_{-1},\frak
g_0)=\ker(\partial^*)\oplus\im(\partial)$, so there is a linear map $\psi:\frak
g_{-1}\to\frak g_1$ such that $\partial^*(\rho_\al+\partial\psi)=0$. As before,
$\rho_\al+\partial Z=\rho_\be$ where $\be(\xi)=\al(\xi)+\psi(\hat\th_{-1}(\xi))$,
which proves existence.

To prove uniqueness, we observe that two maps $\al$ and $\be$ compatible with
fundamental vector fields are related by $\be(\xi)=\al(\xi)+\psi(\th_{-1}(u)(\xi))$
for a linear map $\psi:\frak g_{-1}\to\frak g_1$. Then
$\rho_{\be}=\rho_{\al}+\partial\psi$ and if
$\partial^*\rho_\al=\partial^*\rho_\be=0$, we conclude that
$\partial^*\partial\psi=0$. By part (1) of Proposition \ref{prop2.6} this implies
$\partial\psi=0$, which by part (3) of the same Proposition implies
$\psi=0$. Defining $\om_i=\hat\th_i$ for $i=-1,0$ and letting
$\om_1(u):\widetilde{F}_u\to \frak g_1$ be the map we just constructed, we obtain a
linear map $\om(u):\widetilde{F}_u\to\frak g$ which is surjective and hence a linear
isomorphism by construction. It also follows by construction that $\om(u)$ reproduces
the generators of fundamental vector fields.

Now for $g\in P$ consider $\Ad(g)\o \om(u\cdot g)\o T_ur^g:\widetilde{F}_u\to \frak
g$. By equivariancy of $\hat\th$, its component in $\frak g_{-1}\oplus\frak g_0$
coincides with $\hat\th(u)$ and equivariancy of fundamental vector fields implies
that it reproduces their generators. Similarly as above, one computes that the
associated map $\rho$ is related to the one for $\om(u)$ by the adjoint action of
$g_0=gP_+$ and hence lies in the kernel of $\partial^*$. Thus we conclude that
$\Ad(g)\o \om(u\cdot g)\o T_ur^g=\om(u)$ and hence $\om$ is $P$-equivariant. But we
also conclude that to deduce smoothness of $\om$ it suffices to deduce smoothness
along the images of local sections of $\Cal G\to M$. But along such sections the
construction above can be carried out depending on a point starting from a principal
connection on $\Cal G\to\Cal G_0$. This implies smooth dependence as above, so we
have verified that $\om\in\Ga(L(\widetilde{F},\frak g))$ is a partial Cartan
connection.

By construction the component $\om_{-1}$ coincides with the pullback of
$\hat\th_{-1}$ and hence with the pullback of $\th$, while $\om_0$ coincides with the
pullback of $\hat\th_0$. On the one hand, this implies that the partial $\Cal
G_0$-structure underlying $(p:\Cal G\to M,\om)$ is $(p_0:\Cal G_0\to
M,\th)$. Plugging into the defining equation of curvature and using compatibility of
the exterior derivative with pullbacks, we deduce information on the components
$\ka_i$ of the curvature function $\ka$, which have values in $L(\La^2\frak
g_{-1},\frak g_i)$. Namely, $\ka_{-1}$ is the pullback of \eqref{tors}, while $\ka_0$
is the pullback of \eqref{curv}, so both these components have values in
$\ker(\partial^*)$. But the whole subspace $L(\La^2\frak g_{-1},\frak g_1)$ is
contained in $\ker(\partial^*)$, we have proved that $\om$ is normal and this
completes the proof.
\end{proof}

\begin{remark}\label{rem2.7}
  (1) The construction in the proof of Theorem \ref{thm2.7} is easily seen to be
  functorial and hence together with the construction of the underlying partial
  $G_0$-structure defines an equivalence of categories. We will not need this result,
  however, since we will prove a stronger uniqueness result below, so we just sketch
  the argument.

  Suppose that we have given a morphism $\Ph$ between partial $G_0$-structures
  $(\Cal G_0^i\to M^i,F^i,\th^i)$ for $i=1,2$. Then for any $u_0\in\Cal G_0^1$,
  $T_{u_0}\Ph$ restricts to a linear isomorphism
  $\widetilde{F}^1_{u_0}\to\widetilde{F}^2_{\Ph(u_u)}$. Construct
  $(\Cal G^i\to M^i,\om^i)$ for $i=1,2$ as in the proof, so a point $u\in\Cal G^1$
  over $u_0\in\Cal G^1_0$ is a linear isomorphism
  $\widetilde{F}^1_{u_0}\to\frak g_{-1}\oplus\frak g_0$. This can be composed with
  $(T_{u_0}\Ph|_{\widetilde{F}^1_{u_0}})^{-1}$ to obtain a linear isomorphism defined
  on $\widetilde{F}^2_{\Ph(u_0)}$. Using that $\Ph^*\th^2=\th^1$ and naturality of
  the construction, one verifies that this isomorphism actually lies in $\Cal G^2$,
  let's call it $\tilde\Ph(u)$.

  Hence one obtains a lift $\tilde\Ph:\Cal G^1\to\Cal G^2$ of $\Phi$, which by
  construction has the property that $\tilde\Ph^*\hat\th^2=\hat\th^1$. The
  construction also implies that $\tilde\Phi$ is $P$-equivariant and thus a morphism
  of principal bundles. This also implies that for any $u\in\Cal G^1$, the tangent
  map $T_u\tilde\Ph$ restricts to a linear isomorphism
  $\widetilde{F}^1_u\to\widetilde{F}^2_{\tilde\Ph(u)}$. This implies that
  $\om^2(\tilde\Ph(u))\o T_u\tilde\Ph$ defines a linear isomorphism
  $\widetilde{F}^1_u\to\frak g$ whose component in $\frak g_{-1}\oplus\frak g_0$
  coincides with $\tilde\Ph^*\hat\th^2=\hat\th^1(u)$ by construction. Naturality of
  the normalization condition then implies that $\om^2(\tilde\Ph(u))\o T_u\tilde\Ph$
  satisfies the condition that characterizes $\om^1(u)$, and this implies that
  $\tilde\Ph$ is a morphism of partial Cartan geometries.

  \smallskip

  (2) There is a conceptual interpretation of the construction parallel to the case
  of usual AHS-structures. The linear maps $T_{u_9}\Cal G_0\to\frak g_0$ that occur
  in the first step can be viewed a values of partial principal connection forms in
  $u_0$. There is a correspondence between partial principal connections and partial
  linear connections on $F$, i.e.\ operators $\Ga(F)\times\Ga(F)\to\Ga(F)$ which
  satisfy properties analogous to the defining properties of a linear
  connection. Involutivity of $F$ implies that such a partial linear connection has a
  well defined torsion, which is a section of $L(\La^2F,F)$. The map $\partial^*$
  induces a bundle map $L(\La^2F,F)\to L(F,L(F,F))$ which is analogous to a
  normalization condition on the torsion in the theory of $G_0$-structures. In the
  picture of partial principal connections, this torsion is induced by equation
  \eqref{tors} and so the condition $\partial^*\o\tau_\ph=0$ singles out the values
  of partial principal connections with normal torsion. So one can view the bundle
  $\Cal G$ as obtained from attaching to each point $u_0\in\Cal G_0$ the values of
  the normal partial (principal) connections compatible with the partial
  $G_0$-structure in $u_0$. The pullback of a normal compatible partial connection
  along a morphism of partial $G_0$-structures is a normal compatible partial
  connection, so we obtain a lift to the extended bundle. To make this into a
  principal bundle and construct canonical Cartan connections, the specific algebraic
  setup for the AHS pairs is needed.

  \smallskip

  (3) In \S \ref{2.1} we excluded a specific one-grading of
  $\mathfrak{sl}(n+1,\Bbb R)$ for $n\geq 2$ from our discussion. For this grading,
  $G=SL(n+1,\Bbb R)$ and $P$ the stabilizer of a line in $\Bbb R^{n+1}$ is a possible
  choice of groups. For this choice, $G/P$ is the projective space $\Bbb RP^n$, and
  it is well known that this leads to a geometric structure which behaves very
  similarly to the AHS-structures we discuss. These structures are called (classical)
  projective structures and they are not determined by an underlying
  $G_0$-structure. Given an $n$-dimensional manifold $M$, a projective structure on
  $M$ is defined as an equivalence class of torsion-free linear connections on $TM$
  with two connections being equivalent iff they have the same geodesics up to
  parametrization. Alternatively, this property can be easily characterized by the
  fact that the $\binom{1}{2}$-tensor field that represents the difference of the two
  connections has a special form. It can then be shown that a projective structure
  can be equivalently encoded as a Cartan geometry of type $(G,P)$, which provides
  the analogy to the AHS-structures, see Section 4.1.5 of \cite{book}.

  Now we have the necessary concepts at hand to explain that also this case admits a
  generalization to the partial setting. Given a manifold $M$ of dimension $n+k$ and
  an involutive distribution $F\subset TM$ of rank $n$, we can consider partial
  linear connections $\nabla:\Ga(F)\x\Ga(F)\to\Ga(F)$ as in (2) above and require
  that the partial torsion defined there vanishes identically. Both versions of the
  definition of projective equivalence admit an obvious analog in this setting, and
  one defines a partial projective structure via an equivalence class of torsion-free
  partial connections. As detailed in \cite{Michal-thesis}, the construction of a
  canonical Cartan connection extends to the partial setting. In terms of the proof
  of Theorem \ref{thm2.7}, one starts from the full linear frame bundle $\Cal G_0$ of
  $F$ and attaches to $u_0\in\Cal G_0$ the connection forms of the partial principal
  connection forms corresponding to the partial linear connections in the class.

  The main motivation for these developments in \cite{Michal-thesis} is that
  canonical partial projective structures arise in unexpected situations. The main
  example discussed there is a contact manifold $(M,H)$ endowed with an involutive
  Legendrean distribution $F\subset H$. One then shows that a choice of contact form
  gives rise to a partial linear connection on $F$, and different choices lead to
  projectively equivalent partial connections. Hence one obtains a canonical partial
  projective structure on $(M,F)$, which turns out to always be locally flat. The
  construction of partial BGG sequences we obtain in \S \ref{3} below thus leads to
  differential complexes consisting of (higher order) operators that are intrinsic to
  the geometry $(M,H,F)$.

  The motivation for these developments came from the theory of Legendrean contact
  structures, which belong to the class of parabolic geometries. Such a structure is
  given by a decomposition of $H$ into a direct sum of two (not necessarily
  involutive) Legendrean distributions $E$ and $F$, see Section 4.2.3 of
  \cite{book}. In that case there is the general concept of Weyl connections, which
  produces a distinguished connection for any choice of contact form. Moreover, the
  complexes above are obtained using the machinery of relative BGG sequences
  developed in \cite{Rel-BGG2}. In \cite{Michal-thesis} it is shown that these
  constructions generalize in the way described above to either a single involutive
  Legendrean distribution $F$ or a general Legendrean distributions $F$ together with
  a choice of a (not necessarily Legendrean) complement $E$ in $H$.
\end{remark}

\subsection{Uniqueness}\label{2.8}
The construction of a canonical Cartan connection that we used in the proof of
Theorem \ref{thm2.7} has the advantage that it is geometric in nature. However, there
are very different constructions, for example parallel to the one in Section 3.1 of
\cite{book}, where one starts the construction by making choices and then just
exploits the algebraic properties of the normalization condition. Finally, there are
constructions for Cartan geometries like Fefferman-type constructions or extension
functors which directly produce normal Cartan geometries. Therefore, it is important
to prove uniqueness results for normal Cartan geometries which do not depend on
specific constructions. In our setting, this result looks as follows.

\begin{thm}\label{thm2.8}
  Let $(G,P)$ be an AHS-pair as in Definition \ref{def2.1}.  Suppose that $(p^i:\Cal G^i\to
  M^i,\om^i)$ are normal partial Cartan geometries over $(M^i,F^i)$ for $i=1,2$ with
  underlying partial $G_0$-structures $(p^i_0:\Cal G^i_0\to M^i,\th^i)$. Then any
  morphism $\Phi$ between the underlying partial $G_0$-structures uniquely lifts to a
  morphism of the partial Cartan geometries. In particular, any two normal partial
  Cartan geometries that have the same underlying partial $G_0$-structure are
  isomorphic via an isomorphism inducing the identity on the underlying partial
  $G_0$-structure.
\end{thm}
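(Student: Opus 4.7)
The plan is to show that any normal partial Cartan geometry with a given underlying partial $G_0$-structure $(p_0:\Cal G_0\to M,\th)$ is canonically isomorphic to the bundle $\Cal G^{\mathrm{can}}$ built from $(p_0,\th)$ in the proof of Theorem \ref{thm2.7}. Once this is established, the existence and uniqueness of the lift $\tilde\Phi$ both follow from the functoriality of the canonical construction (sketched in Remark \ref{rem2.7}(1)).

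For a normal partial Cartan geometry $(p:\Cal G\to M,\om)$ with underlying structure $(p_0,\th)$, I would define a canonical map $f_\om:\Cal G\to\Cal G^{\mathrm{can}}$ as follows. For $u\in\Cal G$ with $q(u)=u_0\in\Cal G_0$, the linear isomorphism $\om(u):\widetilde F_u\to\frak g$ sends $\ker(T_uq|_{\widetilde F_u})$ isomorphically onto $\frak g_1$, since $\om$ reproduces the fundamental vector fields for $\frak p$ and this kernel is spanned by the fundamental vector fields for $\frak g_1$. Hence $\om(u)$ descends to a linear isomorphism $\bar\om(u):\widetilde F_{u_0}\to\frak g/\frak g_1\cong\frak g_{-1}\oplus\frak g_0$, and I set $f_\om(u):=\bar\om(u)$. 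That $\bar\om(u)$ lies in $\Cal G^{\mathrm{can}}$ rests on three checks: (i) its $\frak g_{-1}$-component equals $\th(u_0)$ by the definition of the underlying structure; (ii) it reproduces the generators of the fundamental vector fields for $\frak g_0$; and (iii) using $\om_{-1}=q^*\th$ and the expansion of \eqref{eq:curature}, the expression $\tau_{\bar\om(u)}$ from \eqref{tors} coincides with the $\frak g_{-1}$-component $\ka_{-1}(u)$ of the curvature function of $\om$, so that $\partial^*\tau_{\bar\om(u)}=0$ is exactly the $\frak g_{-1}$-component of the normality condition $\partial^*\o\ka=0$. The $P$-action on $\Cal G^{\mathrm{can}}$ set up in Step~2 of the proof of Theorem \ref{thm2.7} is precisely designed to reflect the $P$-equivariance of $\om$, so $f_\om$ is a $P$-equivariant bundle map over $M$ between principal $P$-bundles and hence an isomorphism.

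Next, $f_\om^*\om^{\mathrm{can}}=\om$: the $\frak g_{-1}\oplus\frak g_0$-parts agree by construction of $f_\om$, and the $\frak g_1$-parts agree by the uniqueness of the $\frak g_1$-extension established in Step~4 of the proof of Theorem \ref{thm2.7} (applied to the common $\frak g_{-1}\oplus\frak g_0$-part of two normal partial Cartan connections). With these canonical identifications in hand, the functoriality sketched in Remark \ref{rem2.7}(1) supplies a canonical $\Phi^{\mathrm{can}}:\Cal G^{\mathrm{can},1}\to\Cal G^{\mathrm{can},2}$ lifting $\Phi$ with $(\Phi^{\mathrm{can}})^*\om^{\mathrm{can},2}=\om^{\mathrm{can},1}$. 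The composition $\tilde\Phi:=f_{\om^2}^{-1}\o\Phi^{\mathrm{can}}\o f_{\om^1}$ is then the desired lift of $\Phi$, and $\tilde\Phi^*\om^2=\om^1$ follows by chaining the three pullback identities. For uniqueness, any other lift $\tilde\Phi'$ satisfying $(\tilde\Phi')^*\om^2=\om^1$ yields $f_{\om^2}\o\tilde\Phi'(u)=\overline{\om^2(\tilde\Phi'(u))}$; a short computation using $q^2\o\tilde\Phi'=\Phi\o q^1$ together with $(\tilde\Phi')^*\om^2=\om^1$ shows this coincides with $\Phi^{\mathrm{can}}\o f_{\om^1}(u)$, and hence $\tilde\Phi'=\tilde\Phi$.

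The main technical obstacle I anticipate is step (iii) above, the identification of the torsion $\tau_{\bar\om(u)}$ with the curvature component $\ka_{-1}(u)$. This requires a careful comparison of \eqref{tors} (formulated on $\Cal G_0$ using $\th$) with the $\frak g_{-1}$-part of the expansion of \eqref{eq:curature} on $\Cal G$: one uses $\om_{-1}=q^*\th$ to convert the derivative $d\om_{-1}$ at $u\in\Cal G$ into $d\th$ at $u_0\in\Cal G_0$, and exploits the grading-compatibility of the bracket on $\frak g$ to match the $\frak g_{-1}$-component of $[\om,\om]$ with the bracket terms in \eqref{tors}; the required well-definedness of the descent of the $\frak g_0$-part of $\om(u)$ at a single point follows because $\om$ sends the $P_+$-vertical directions to $\frak g_1$.
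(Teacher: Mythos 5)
Your argument is correct, but it takes a genuinely different route from the paper's. The paper never compares a given normal geometry with the model built in Theorem \ref{thm2.7}; it works intrinsically with two normal partial Cartan connections $\om,\tilde\om$ on the \emph{same} principal $P$-bundle, writes $\tilde\om=\om+\psi\o\th$, and exploits normality together with Proposition \ref{prop2.6} in two stages: first $\partial^*(\partial(\psi_0))=0$ forces $\psi_0(u)=\partial(Z(u))$ for a unique $Z(u)\in\frak g_1$, which (after a partition-of-unity argument producing a smooth equivariant $Z:\Cal G\to\frak g_1$) yields the gauge transformation $\Xi(u)=u\cdot\exp(Z(u))$ matching the $\frak g_0$-components; then the residual $\frak g_1$-discrepancy is killed by $\partial^*(\partial(\psi))=0\Rightarrow\psi=0$. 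Existence of the lift is then obtained from a global $G_0$-equivariant section of $\Cal G\to\Cal G_0$ rather than from functoriality. You instead promote Remark \ref{rem2.7}(1) to the main tool: every normal geometry is canonically isomorphic to the constructed model via $f_\om$, and the lift is obtained by conjugating the functorial map. Your checks are sound --- in particular the identifications $\tau_{\bar\om(u)}=\ka_{-1}(u)$ and (for the $\frak g_1$-part) $\rho_{\om_1(u)}=\ka_0(u)$ work exactly as you describe, and these are essentially the computations hidden in the last paragraph of Step 4 of the proof of Theorem \ref{thm2.7}; note also that $f_\om$ covers the identity on $\Cal G_0$, which is what makes your $\tilde\Phi$ a lift of $\Phi$ and gives the ``in particular'' clause. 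What your route buys is a cleaner categorical picture (an explicit equivalence of categories, with the comparison map $f_\om$ given pointwise) and it avoids the partition-of-unity gluing, since $f_\om$ is manifestly smooth. What it costs is that the uniqueness theorem becomes logically dependent on the specific construction of Theorem \ref{thm2.7} and on fully writing out the functoriality that the paper deliberately leaves as a sketch (that $u\mapsto u\o(T_{u_0}\Phi|_{\widetilde F^1_{u_0}})^{-1}$ preserves the torsion normalization, is $P$-equivariant, and intertwines the normalized $\frak g_1$-extensions); the paper's stated motivation for Theorem \ref{thm2.8} is precisely to have a uniqueness statement independent of any particular construction, which your argument still delivers, but less directly.
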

\begin{proof}
Start with $(M,F)$ and a principal $P$-bundle $\Cal G\to M$ and suppose that
$\om,\tilde\om\in\Ga(L(\widetilde{F},\frak g))$ are partial Cartan connections on
$\Cal G$, which induce the same underlying partial $G_0$-structure $(p_0:\Cal G_0\to
M,\th)$. The latter condition means that for each $u\in\Cal G$ and the components in
$\frak g_{-1}$ we get $\om_{-1}(u)=\tilde\om_{-1}(u)$. We write $\th(u)$ for this
component, since it corresponds to the pullback of $\th$ along the projection $q:\Cal
G\to\Cal G_0$. Now $\tilde\om-\om\in\Ga(L(\widetilde{F},\frak g))$ vanishes on the
vertical subspace and has values in $\frak g_0\oplus\frak g_1$, so there is a smooth
map $\psi:\Cal G\to L(\frak g_{-1},\frak g_0\oplus\frak g_1)$ such that
\begin{equation}\label{psidef}
  \tilde\om(u)(\xi)=\om(u)(\xi)+\psi(u)(\th(u)(\xi))
\end{equation}
for each $\xi\in T_u\Cal G$. Moreover, equivariancy of $\om$ and $\tilde\om$ easily
implies that $\psi(u\cdot g)=g^{-1}\cdot\psi(u)$ where in the right hand side we use
the action of $P$ coming from the identification of the target space with $L(\frak
g/\frak p,\frak p)$.

Now denote by $\ka$ and $\tilde\ka$ the curvature functions of the two Cartan
connections and by $\ka_i$ and $\tilde\ka_i$ their components in $\frak g_i$ for
$i=-1,0,1$. Then \eqref{psidef} easily implies that for the lowest components, we get
$$
\tilde\ka_{-1}(u)=\ka_{-1}(u)+\partial(\psi_0(u)),  
$$
where $\psi_0$ denotes component of $\psi$ in $L(\frak g_{-1},\frak g_0)$. Since both
$\tilde\ka$ and $\ka$ have values in $\ker(\partial^*)$, we conclude that
$0=\partial^*(\partial(\psi_0(u)))$. By Proposition \ref{prop2.6}, this implies that
$\partial(\psi_0(u))=0$ and hence $\psi_0(u)=\partial(Z(u))$ for a unique element
$Z(u)\in\frak g_1$.

Next, we claim that we can choose $Z(u)$ depending smoothly on $u$ and such that for
$g\in P$, we obtain $Z(u\cdot g)=\Ad(g_0^{-1})(Z(u))$, where, as before,
$g_0=gP_+$. For an open subset $U\subset M$ for which there is a smooth section
$\si:U\to\Cal G$, the map $x\mapsto\psi_0(\si(x))$ is smooth and by uniqueness
$Z(\si(x))$ has to be obtained from $\psi_0(\si(x))$ by composition with the inverse
of the linear isomorphism $\partial:\frak g_1\to\ker(\partial^*)\subset L(\frak
g_{-1},\frak g_0)$. Thus $x\mapsto Z(\si(x))$ is smooth and we can uniquely extend
this to a smooth map $p^{-1}(U)\to\frak g_1$ by putting $Z(\si(x)\cdot
g):=\Ad(g^{-1})(Z(\si(x)))$. Of course, this map then satisfies $Z(u\cdot
g)=\Ad(g^{-1})(Z(u))$ for any $u\in p^{-1}(U)$ and $g\in P$ and since $P_+$ acts
trivially on $\frak g_1$, this coincides with the required equivariancy
condition. Finally, since $\psi_0(\si(x)\cdot g)=g_0^{-1}\cdot\psi_0(\si(x))$,
$G_0$-equivariancy of $\partial$ implies that $\psi_0(u)=\partial Z(u)$ for all $u\in
p^{-1}(U)$. Doing this for the elements of an open covering of $M$, the corresponding
local maps can be glued to a global map with the required properties using (the
pullback of) a partition of unity on $M$.

Now $Z(u\cdot g)=\Ad(g^{-1})(Z(u))$ implies that $\exp(Z(u\cdot
g))=g^{-1}\exp(Z(u))g$ which shows that $u\mapsto u\cdot\exp(Z(u))$ defines a fiber
bundle map $\Xi:\Cal G\to\Cal G$ which covers the identity on the underlying partial
$G_0$-structure and is $P$-equivariant, so $\Xi$ is a morphism of principal
bundles. Thus $\Xi^*\tilde\om$ is a Cartan connection on $\Cal G$ and its curvature
function is given by $\tilde\ka\o\Xi$, so it also has values in
$\ker(\partial^*)$. For $u\in\Cal G$ and $\xi\in T_u\Cal G$, we get
$\Xi^*\tilde\om(u)(\xi)=\tilde\om(u\cdot\exp(Z(u))(T_u\Xi(\xi))$. Now the definition
of $\Xi$ readily implies that $T_u\Xi(\xi)=T_ur^{\exp(Z(u))}(\xi)+\eta$, where
$\eta$ lies in the vertical bundle of $q:\Cal G\to\Cal G_0$ and hence
$\tilde\om(\eta)\in\frak g_1$. Equivariancy of $\tilde\om$ thus implies that, modulo
elements of $\frak g_1$, we get
$$
\Xi^*\tilde\om(u)(\xi)=\Ad(\exp(-Z(u))(\tilde\om(u)(\xi))=\tilde\om(u)(\xi)-
   [Z(u),\th(u)(\xi)].  
$$
But the last term coincides with $-\psi_0(u)(\th(u)(\xi))$, so $\Xi^*\tilde\om$ and
$\om$ have the same $\frak g_0$-component.

So let us assume that in the above procedure we know that the components of
$\tilde\om$ and $\om$ in $\frak g_{-1}\oplus\frak g_0$ agree. Then the map $\psi$ in
\eqref{psidef} has values in $\frak g_1$ and from the definitions it follows readily
that $\tilde\ka_{-1}=\ka_{-1}$, while
$\tilde\ka_0(u)=\ka_0(u)+\partial(\psi(u))$. Normality of both Cartan connections
then implies $\partial^*( \partial(\psi(u)))=0$, which by Proposition \ref{prop2.6}
implies $\partial(\psi(u))=0$ and therefore $\psi(u)=0$. Hence we conclude that
$\Xi^*\tilde\om=\om$ so any two normal Cartan connections on $\Cal G$ that induce the
same underlying $G_0$-structure are related by an automorphism of $\Cal G$ which
induces the identity on this underlying structure.

To get the general statement on existence, we assume that $\Phi:\Cal G^1_0\to\Cal
G^2_0$ is a morphism of $G_0$-principal bundles with base map $\ph$ such that
$\Phi^*\th^2=\th^1$. Using that $\exp:\frak g_1\to P_+$ is a diffeomorphism, one
shows that the $P_+$-principal bundles $q^i:\Cal G^i\to\Cal G_0^i$ are trivial for
$i=1,2$, and there even are $G_0$-equivariant smooth sections $\si_i:\Cal
G_0^i\to\Cal G^i$ for $i=1,2$, see Proposition 5.1.1 of \cite{book}. Now there is a
principal bundle homomorphism $\tilde\Phi:\Cal G^1\to\Cal G^2$ characterized by
$\tilde\Ph\o\si_1=\si_1\o\Phi$, which of course lifts $\Phi:\Cal G^1_0\to\Cal
G^2_0$. Then $\tilde\Phi^*\om^2$ defines a partial Cartan connection on $\Cal G^1$
which is normal as the pullback of a normal connection. Hence from above, we know
that there is a morphism $\Xi:\Cal G^1\to\Cal G^1$ that covers the identity on $\Cal
G^1_0$ such that $\Xi^*\tilde\Ph^*\om^2=\om^1$ and hence $\tilde\Ph\o\Xi$ is a
morphism of Cartan geometries that lifts $\Ph$.

To prove uniqueness, we observe that any other homomorphism $\Cal G^1\to\Cal G^2$
that lifts $\Phi$ is of the form $\hat\Phi(u)=\tilde\Ph(u)\cdot\exp(Z(u))$ for a
smooth function $Z:\Cal G^1\to \frak g_1$ such that $Z(u\cdot g)=\Ad(g^{-1})(Z(u))$.
If $\tilde\Phi^*\om^2=\om^1$ then as above we conclude that for $u\in\Cal G$ and
$\xi\in T_u\Cal G$ the $\frak g_0$-component of $(\hat\Phi^*\om^2)(x)\xi$ coincides
with the the one of $\om^1(u)(\xi)-[Z(u),\th(u)(\xi)]$. Thus
$\hat\Phi^*\om^2(u)=\om^1(u)$ is only possible of $[Z(u),X]=0$ for any $X\in\frak
g_{-1}$, which implies $Z(u)=0$ and hence $\hat\Phi=\tilde\Phi$.
\end{proof}

\section{The partial BGG machinery}\label{3}

Having the setup of partial Cartan geometries corresponding to AHS-pairs $(G,P)$ at
hand, we can introduce BGG sequences following ideas from standard BGG sequences for
AHS-structures on the one hand an the relative BGG machinery introduced in
\cite{Rel-BGG2} on the other hand. Basically, one can use all the algebraic results
related to standard BGG sequences in the AHS case, while the differential operator
part is best done following the relative machinery.

\subsection{The partial fundamental derivative}\label{3.1}
Let $(G,P)$ be an AHS-pair as in Definition \ref{def2.1} and let
$(p:\Cal G\to M,\om)$ be a partial Cartan geometry of type $(G,P)$ on $(M,F)$. Then
we define the \textit{partial adjoint tractor bundle} $\Cal A_FM$ as the associated
bundle $\Cal G\x_P\frak g$. Sections of $\Cal A_FM$ are then in bijective
correspondence with smooth functions $f:\Cal G\to\frak g$ such that
$f(u\cdot g)=\Ad(g^{-1})(f(u))$ for any $u\in\Cal G$ and $g\in P$. But by definition,
$\om(u)$ defines a linear isomorphism $\widetilde F_u\to\frak g$, so given $f$ as
above, there is a unique element $\xi(u)\in \widetilde F_u$ such that
$\om(u)(\xi(u))=f(u)$. This defines a bijection between $C^\infty(\Cal G,\frak g)$
and $\Ga(\widetilde F)$ and using equivariancy of $\om$, the equivariancy condition
on $f$ translates to $\xi(u\cdot g)=T_ur^g(\xi(u))$ i.e.\ to $(r^g)^*\xi=\xi$. Thus
we see that sections of $\Cal A_FM$ are in bijective correspondence with
$P$-invariant sections of $\widetilde F\to\Cal G$.

This shows that we get counterparts to all the well-known operations on adjoint
tractor bundles in our setting:
\begin{itemize}
\item A bilinear bundle map $\{\ ,\ \}:\Cal A_FM\x\Cal A_FM\to\Cal A_FM$ induced by
  the Lie bracket on $\frak g$, which makes $\Cal A_FM$ into a bundle of Lie
  algebras.
\item A filtration of $\Cal A_FM$ by smooth subbundles of the form
  $\Cal A^1_FM\subset\Cal A^0_FM\subset\Cal A^{-1}_FM=\Cal A_FM$ induced by
  $\frak g_1\subset\frak p\subset\frak g$. This is compatible with $\{\ ,\ \}$ in the
  sense that (with obvious conventions) $\{\Cal A^i_FM,\Cal A^j_FM\}\subset\Cal
  A^{i+j}_FM$. Moreover, projecting invariant vector fields to $M$ gives rise to
  an isomorphism $\Cal A_FM/\Cal A^0_FM\to F$.
\item A Lie bracket $[\ ,\ ]:\Ga(\Cal A_FM)\x\Ga(\Cal A_FM)\to\Ga(\Cal A_FM)$ induced
  by the Lie bracket of vector fields. Viewing the map $\Cal A_FM\to F$ from above as
  having values in $TM$ and using it as an anchor, this makes $\Cal A_FM$ into a Lie
  algebroid over $M$.
\item By construction, the curvature function $\ka$ of $\om$ is equivariant, and
  hence it can be equivalently interpreted as a section of
  $\La^2F^*\otimes\Cal A_FM$, i.e.\ a partially defined two-form with values in
  $\Cal A_FM$.
\end{itemize}

Now assume that $\Bbb W$ is any representation of $P$. Then we get an associated
natural vector bundle $WM:=\Cal G\x_P\Bbb W\to M$ over $M$, and sections of $WM$ are
in bijective correspondence with smooth functions $f:\Cal G\to\Bbb W$ such that
$f(u\cdot g)=g^{-1}\cdot f(u)$. We can use any vector field on $\Cal G$ to
differentiate such a function, but the result will not be equivariant in general. But
if we take invariant vector fields and in particular the invariant sections of
$\widetilde F$ corresponding to sections of $\Cal A_FM$, we will end up with
equivariant functions. Thus, we get an operator $\Cal D:\Ga(\Cal
A_FM)\x\Ga(WM)\to\Ga(WM)$ which we denote by $(s,\si)\mapsto \Cal D_s\si$ in order to
emphasize the analogy to a (partial) covariant derivative. Indeed, the operator is
bilinear and linear over smooth functions in the first variable. In the second
variable, we have a Leibniz rule of the form $\Cal D_s(h\si)=(\Pi(s)\cdot h)\si+
h\Cal D_s\si$ for $h\in C^\infty(M,\Bbb R)$, where $\Pi$ denotes the tensorial
operator induced by the projection $\Cal A_FM\to F$. The construction also implies
immediately that this operation is compatible with all natural bundle maps that come
from $P$-equivariant maps between the inducing representations. Finally, since the
operation is tensorial in the first variable, we can also view it as $\Cal
D:\Ga(WM)\to \Ga(\Cal A^*_FM\otimes WM)$ via $(\Cal D\si)(s)=D_s(\si)$.

\subsection{Partial tractor connection and partial twisted exterior
  derivative}\label{3.2} Consider a representation $\Bbb V$ of $G$ that we can view
as a representation of $P$ by restriction. The we call the associated bundle
$VM:=\Cal G\x_P\Bbb V$ a \textit{partial tractor bundle}. Compared to a general
natural bundle, we now get an additional ingredient, namely a bilinear bundle map
$\Cal A_FM\x VM\to VM$ induced by the infinitesimal representation $\frak g\x\Bbb
V\to\Bbb V$. We will denote this bundle map and the induced tensorial operation on
sections by $(s,t)\mapsto s\bullet t$. This makes several algebraic operations
available which are the basis for the further developments. First, we get the
standard Lie algebra cohomology differential $\partial_{\frak g}:\La^k\frak
g^*\otimes\Bbb V\to\La^{k+1}\frak g^*\otimes\Bbb V$ for each $k=0,\dots,\dim(\frak
g)-1$. Viewing elements of $\La^*\frak g^*\otimes\Bbb V$ as multilinear alternating
maps, this is explicitly given by
\begin{equation}\label{partial-def}
\begin{aligned}
\partial_{\frak g}\al(A_0,\dots,A_k)&=\textstyle\sum_{i=0}^k(-1)^i
A_i\cdot\al(A_0,\dots,\widehat{A_i},\dots,A_n)\\
&+\textstyle\sum_{i<j}(-1)^{i+j}\al([A_i,A_j],A_0,\dots,\widehat{A_i},\dots,\widehat{A_j},\dots,A_k), 
\end{aligned}
\end{equation} 
where the dot in the first sum denote the action $\frak g\x \Bbb V\to\Bbb V$ and the
hats denote omission. Since this only uses the Lie bracket on $\frak g$ and the
action of $\frak g$ on $\Bbb V$ we immediately conclude that $\partial_{\frak g}$ is
$G$-equivariant and hence also $P$-equivariant. Consequently, it induces bundle maps
$\La^k\Cal A^*_FM\otimes VM\to\La^{k+1}\Cal A^*_FM\otimes VM$ and we also denote
these as well as the induced tensorial operations on sections by the same symbol
$\partial_{\frak g}$. Observe that we can view
$\ph\in\Ga(\La^k\Cal A^*_FM\otimes VM)$ as a $k$-linear alternating map
$\Ga(\Cal A^*_FM)^k\to\Ga(VM)$, which is linear over smooth functions in each
argument. In this picture, $\partial_{\frak g}\ph$ is given by a formula completely
analogous to \eqref{partial-def} but with the dot in the first sum replaced by
$\bullet$ and the bracket in the second sum replaced by $\{\ ,\ \}$.

There also is a natural differential operator acting between the same bundles. Given
$\ph\in\Ga(\La^k\Cal A^*_F\otimes VM)$, we can form $\Cal D\ph\in\Ga(\Cal A^*_FM\otimes
\La^k\Cal A^*_FM\otimes VM)$, thus defining a $(k+1)$-linear operator $\Ga(\Cal
A_F^*M)^{k+1}\to\Ga(VM)$ which is linear over smooth functions in each variable. We
can then make the result alternating by simply forming
\begin{equation}\label{Da-def}
(\Cal D^{\frak a}\ph)(s_0,\dots,s_k):=\textstyle\sum_{i=0}^k(-1)^i(\Cal
  D_{s_i}\ph)(s_0,\dots,\widehat{s_i},\dots,s_k) 
\end{equation}
for $s_j\in\Ga(\Cal A^*_FM)$. We can combine this with the Lie algebra cohomology
differential, by defining
$$ \tilde d^V_F:\Ga(\La^k\Cal A^*_FM\otimes VM)\to\Ga(\La^{k+1}\Cal A^*_FM\otimes VM)
\qquad \tilde d^V_F\ph:=\Cal D^{\frak a}\ph+\partial_{\frak g}\ph.
$$ The motivation for forming this combination is nicely visible by looking at the
case $k=0$, where we start with $t\in\Ga(VM)$ and then obtain $(\tilde d^V_Ft)(s)=\Cal
D_st+s\bullet t$. Now suppose that $s$ is a section of the subbundle $\Cal
A^0_FM\subset\Cal A_FM$. By definition this means that for the invariant vector field
$\xi\in\Ga(\widetilde F)$ corresponding to $s$ and a point $u\in\Cal G$, we get
$\om(\xi(u))\in\frak p$ and putting $\om(\xi(u))=A$, this means that
$\xi(u)=\ze_A(u)$. Denoting by $f:\Cal G\to\Bbb V$ the equivariant function
corresponding to $s$, equivariancy implies that $\xi(f)(u)=-A\cdot f(u)$, which
equals the value of the function corresponding to $-s\bullet t$ in $u$. Hence we
conclude that $\tilde d^V_Ft(s)=0$ for any $s\in\Ga(\Cal A^0_FM)$ and since $\Cal
A_FM/\Cal A^0_FM\cong F$, we conclude that $\tilde d^V_Ft$ actually is a section of
$F^*\otimes VM$. So we can view this as an operator $d^V_F:\Ga(VM)\to\Ga(F^*\otimes
VM)$ and the Leibniz rule for $\Cal D$ immediately implies that this defines a
partial connection, called the \textit{partial tractor connection} on the partial
tractor bundle $VM$.

It is rather easy to extend this to higher degrees. The bundle $\La^kF^*\otimes VM$
can be naturally viewed as the subbundle of $\La^k\Cal A^*_FM\otimes VM$, consisting
of those maps $(\Cal A_FM)^k\to VM$ that vanish upon insertion of one element of $\Cal
A^0_FM$. If $\phi$ is a section of this subbundle, then by the naturality discussed
in \S \ref{3.1} also $\Cal D_s\ph$ has this property for each $s\in\Ga(\Cal
AM)$. Using this, one can compute explicitly the behavior of $\tilde d^V_F\ph$ under
insertion of a section of $\Cal A^0_FM$ exactly as in the the first part of the proof
of Theorem 4.1 of \cite{Rel-BGG2} to obtain the following result. In what follows, we
use the notation $\Om^k_F(M,VM):=\Ga( \La^kF^*\otimes VM)$ to emphasize the point of
view of ``partially defined differential forms''.  
\begin{prop}\label{prop3.2}
Let $(G,P)$ be an AHS-pair and let $(p:\Cal G\to M,\om)$ be a partial Cartan geometry
of type $(G,P)$ on $(M,F)$. For a representation $\Bbb V$ of $G$, let $VM$ be the
corresponding partial tractor bundle. Then the operators $\tilde d^V_F$ restrict to
natural differential operators $d^V_F:\Om^k_F(M,VM)\to\Om^{k+1}_F(M,VM)$ for each
$k=0,\dots,n-1$. 
\end{prop}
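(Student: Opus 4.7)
The plan is to show that the ambient operator $\tilde d^V_p$ preserves the subbundle $\La^\bullet F^*\otimes VM\subset \La^\bullet \Cal A^*_pM\otimes VM$, characterised as the annihilator of $\Cal A^0_pM$. This is a purely algebraic check extending the $k=0$ computation carried out just before the statement; once it is in place, $d^V_p$ is immediately obtained as a natural first-order operator, with naturality inherited from that of $\Cal D$, $\bullet$, and $\{\,,\,\}$.

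The technical core is to describe how $\Cal D_{s_0}$ behaves when $s_0\in \Ga(\Cal A^0_pM)$. The invariant section $\xi_0\in \Ga(\widetilde F)$ corresponding to $s_0$ is then vertical over $M$, and at each $u\in \Cal G$ it equals the value of the fundamental vector field $\ze_{A(u)}$ where $A(u):=\om(u)(\xi_0(u))\in \frak p$ is precisely the equivariant function representing $s_0$. For any $P$-representation $\Bbb W$ and any section $\si\in \Ga(WM)$, differentiating the equivariant function $f:\Cal G\to \Bbb W$ along $\xi_0$ and using equivariancy at $u$ gives $\xi_0(f)(u)=-A(u)\cdot f(u)$, i.e.\ $\Cal D_{s_0}\si=-s_0\bullet\si$. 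Specialising to $\Bbb W=\frak g$ under $\Ad$ delivers $\Cal D_{s_0}s_j=-\{s_0,s_j\}$ for $s_j\in \Ga(\Cal A_pM)$.

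With these in hand, plug $(s_0,s_1,\dots,s_k)$ with $s_0\in \Ga(\Cal A^0_pM)$ into $\tilde d^V_p\ph=\Cal D^{\frak a}\ph+\partial_{\frak g}\ph$. By the hypothesis on $\ph$, together with naturality of $\Cal D_{s_i}$ (which preserves the subbundle $\La^k F^*\otimes VM$), all terms in which $s_0$ sits inside an argument of $\ph$ drop out. What remains are $(\Cal D_{s_0}\ph)(s_1,\dots,s_k)$, the summand $s_0\bullet\ph(s_1,\dots,s_k)$ from the first sum in $\partial_{\frak g}\ph$, and $\sum_{j=1}^k (-1)^j \ph(\{s_0,s_j\},s_1,\dots,\widehat{s_j},\dots,s_k)$ from the $(0,j)$-summands of the second. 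Expanding the first contribution by the tensorial Leibniz rule for $\Cal D$ and substituting the two identities above rewrites it as $-s_0\bullet\ph(s_1,\dots,s_k)+\sum_j \ph(s_1,\dots,\{s_0,s_j\},\dots,s_k)$ with the bracket in the $j$-th slot. The $s_0\bullet$-terms cancel against the second surviving contribution, and moving $\{s_0,s_j\}$ from the first to the $j$-th slot via alternation produces a sign $(-1)^{j-1}$ that cancels the weight $(-1)^j$ in the remaining sum. Everything collapses to zero, proving that $\tilde d^V_p\ph$ indeed annihilates $\Cal A^0_pM$ as required. The main obstacle is simply the sign bookkeeping in this last step, which essentially reproduces the first half of the proof of Theorem~4.1 in \cite{Rel-BGG2}.
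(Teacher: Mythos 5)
Your proposal is correct and follows exactly the route the paper intends: the paper itself only sketches the $k=0$ case and then defers the insertion computation for higher $k$ to the first part of the proof of Theorem~4.1 of \cite{Rel-BGG2}, which is precisely the calculation you carry out. Your two identities $\Cal D_{s_0}\si=-s_0\bullet\si$ and $\Cal D_{s_0}s_j=-\{s_0,s_j\}$ for $s_0\in\Ga(\Cal A^0_pM)$, the Leibniz expansion of $(\Cal D_{s_0}\ph)(s_1,\dots,s_k)$, and the sign bookkeeping all check out.
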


\subsection{The algebraic Hodge theory}\label{3.3}
The second main ingredient needed for our version of the BGG construction is the
special case of B.\ Kostant's algebraic Hodge theory from \cite{Kostant} that is used
in the theory of AHS-structures. Since $\frak g_{-1}$ is a Lie subalgebra of
$\frak g$, we can restrict the infinitesimal action $\frak g\x \Bbb V\to\Bbb V$ from
\S \ref{3.2} to $\frak g_{-1}$. This makes $\Bbb V$ into a representation of the
(Abelian) Lie algebra $\frak g_{-1}$, so there also is a Lie algebra cohomology
differential
$\partial:=\partial_{\frak g_{-1}}:\La^k\frak g_{-1}^*\otimes\Bbb V\to\La^{k+1}\frak
g_{-1}\otimes\Bbb V$. This is defined by formula \eqref{partial-def}, but since
$\frak g_{-1}$ is Abelian, only the first sum in the formula survives. Compared to the
case of $\partial_{\frak g}$ discussed above, we have weaker equivariancy properties,
however. The adjoint action of elements of $G_0$ by definition maps $\frak g_{-1}$ to
itself, which easily implies that $\partial$ is $G_0$-equivariant. However, this is
not true for elements of $P$ and the standard way to define an action on
$\frak g_{-1}$ is via the identification with $\frak g/\frak p$. It turns out,
however, that this action does \textit{not} lead to $P$-equivariancy of the
infinitesimal representation and of $\partial$. Hence there is no immediate way to
convert $\partial$ into a geometric operation, and we'll discuss below how to deal
with this problem.

On the other hand, $\frak g_1\subset\frak g$ also is an Abelian Lie subalgebra of
$\frak g$ and by restriction $\Bbb V$ becomes a representation of $\frak g_1$. Dually
to Lie algebra cohomology, there is a standard complex computing Lie algebra homology
of $\frak g_1$ with coefficients in $\Bbb V$. The differential in this complex maps
$\La^k\frak g_1\otimes\Bbb V$ to $\La^{k-1}\frak g_1\otimes\Bbb V$. In the theory of
parabolic geometries, this is traditionally referred to as the \textit{Kostant
  codifferential} and denoted by $\partial^*$. Explicitly,
\begin{equation}\label{part*def}
\partial^*(Z_1\wedge\dots\wedge Z_k\otimes
v)=\sum_{i=1}^k(-1)^iZ_1\wedge\dots\wedge\widehat{Z_i}\wedge\dots \wedge Z_k\otimes
(Z_i\cdot v). 
\end{equation}
Now $\frak g_1$ actually \textit{is} invariant under the adjoint action of elements
of $P$ and then the definition readily implies that $\partial^*$ is
$P$-equivariant. As we has noted in \S \ref{2.4}, the Killing form of $\frak g$
induces a $P$-equivariant isomorphism between $\frak g_1$ and $(\frak g/\frak p)^*$,
so in particular $\frak g_1\cong \frak g_{-1}^*$ as a $G_0$-module. Thus $\partial^*$
acts between the same spaces as $\partial$ but in the opposite
direction. Indeed, Kostant's original definition of $\partial^*$ was as the adjoint
of $\partial$ with respect to a certain inner product, the algebraic
interpretation was noted only later (and provides a nice explanation for the better
equivariancy properties). The adjointness easily implies the following result, which
is known as the \textit{algebraic Hodge decomposition}, see \cite{Kostant} and
Theorem 3.3.1 of \cite{book} for a proof.

\begin{thm}\label{thm3.3}
  For any $k=0,\dots,n=\dim(\frak g_{-1})$ consider the $G_0$-equivariant map
  $\square=\square_k:=\partial^*\o\partial+\partial\o\partial^*$ acting on
  $\La^k\frak g_{-1}^*\otimes\Bbb V$. Then there is a $G_0$-invariant decomposition
  \begin{equation}\label{Hodge}
   \La^k\frak g_{-1}^*\otimes\Bbb V=\im(\partial)\oplus
   \ker(\square)\oplus \im(\partial^*),
  \end{equation}
  with the first two summands adding up to $\ker(\partial)$ and the last two summands
  adding up to $\ker(\partial^*)$.
\end{thm}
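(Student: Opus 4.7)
The plan is to deduce the decomposition from elementary Hodge-theoretic arguments applied to the pair $(\partial,\partial^*)$. The key input will be a positive-definite Hermitian inner product on each space $\La^k\frak g_{-1}^*\otimes\Bbb V$ with respect to which $\partial^*$ is the Hermitian adjoint of $\partial$; granted this, the decomposition will follow by standard linear algebra.

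To build the inner product, I would choose a Cartan involution $\th$ of $\frak g$ whose fixed-point algebra is a compact real form $\frak u\subset\frak g$ and which interchanges $\frak g_1$ and $\frak g_{-1}$; such a $\th$ exists because a $|1|$-grading is determined by a semisimple grading element, and one can adapt a compact form to it. The involution produces a conjugate-linear isomorphism $\frak g_{-1}\to\frak g_1$, and composing with the Killing-form identification $\frak g_1\cong\frak g_{-1}^*$ yields a positive-definite Hermitian form on $\frak g_{-1}^*$. Combined with any Hermitian inner product on $\Bbb V$ under which $\frak u$ acts by skew-Hermitian operators, this gives a positive-definite Hermitian form on $\La^k\frak g_{-1}^*\otimes\Bbb V$. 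A direct comparison of formula \eqref{partial-def} (restricted to $\frak g_{-1}$, so that only the first sum survives) with formula \eqref{part*def}, carefully tracking the Killing-form identification, then shows that $\partial^*$ is the adjoint of $\partial$.

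The rest is formal. From $\partial^2=0=(\partial^*)^2$ together with adjointness one gets $\im(\partial)\subset\ker(\partial)=\im(\partial^*)^\perp$ and $\im(\partial^*)\subset\ker(\partial^*)=\im(\partial)^\perp$, so $\im(\partial)$ and $\im(\partial^*)$ are orthogonal. Taking the orthogonal complement of $\im(\partial)$ inside $\ker(\partial)$ one then obtains
\[
\La^k\frak g_{-1}^*\otimes\Bbb V=\im(\partial)\oplus\bigl(\ker(\partial)\cap\ker(\partial^*)\bigr)\oplus\im(\partial^*),
\]
with the first two summands exhausting $\ker(\partial)$ and the last two exhausting $\ker(\partial^*)$. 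The identity $\ker(\square)=\ker(\partial)\cap\ker(\partial^*)$ follows from positive-definiteness: $\square v=0$ forces $0=\langle\square v,v\rangle=\|\partial v\|^2+\|\partial^* v\|^2$, so $\partial v=\partial^* v=0$, while the reverse inclusion is immediate. The three summands are automatically $G_0$-invariant, since they are images and kernels of the $G_0$-equivariant maps $\partial$ and $\partial^*$; no $G_0$-invariance of the inner product itself is needed.

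The main obstacle will be the adjointness calculation, which requires both the compatibility of the chosen compact form with the grading and a careful bookkeeping of the Killing-form identification $\frak g_1\cong\frak g_{-1}^*$; everything afterwards is routine. A pleasant byproduct is that while $\partial^*$ is introduced as a homology differential for $\frak g_1$, adjointness exhibits it as the codifferential of the cohomology differential $\partial$ for $\frak g_{-1}$, thereby explaining the notation and the name.
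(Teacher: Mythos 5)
Your proof is correct and is essentially the argument the paper relies on: the paper does not prove Theorem \ref{thm3.3} itself but cites \cite{Kostant} and Theorem 3.3.1 of \cite{book}, and its surrounding text attributes the result precisely to the adjointness of $\partial$ and $\partial^*$ with respect to a suitable inner product, which is exactly what you establish via a grading-compatible Cartan involution followed by standard Hodge-theoretic linear algebra. The only cosmetic point is that in the real setting the form built from the Cartan involution and the Killing form is a positive-definite real inner product rather than a Hermitian one (and adjointness may only hold up to an overall sign depending on conventions), neither of which affects the kernels, images, or the decomposition.
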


The $G_0$-invariant subspace $\ker(\square)$ can be identified with
$\ker(\partial)/\im(\partial)$ and thus with the Lie algebra cohomology $H^{k-1}(\frak
g_{-1},\Bbb V)$. However, we can also identify it with
$\ker(\partial^*)/\im(\partial^*)=H_k(\frak g_1,\Bbb V)$, which endows it naturally
with the structure of a $P$-module. It turns out that $P_+\subset P$ acts trivially
on this quotient. In \cite{Kostant}, B.\ Kostant analyzed the map $\square$ in more
detail (see also below) to obtain a complete description of the $G_0$-representation
$\ker(\square)$ in terms of highest weights. These weights are algorithmically
computable, so one can always determine the representations $H_k(\frak g_1,\Bbb V)$
explicitly.

\subsection{Converting to geometry}\label{3.4}
Parts of the developments of \S \ref{3.3} can be directly translated to geometric
objects on a partial Cartan geometry $(p:\Cal G\to M,\om)$ of type $(G,P)$. The
$P$-equivariant maps $\partial^*$ give rise to natural bundle maps
$\La^kF^*\otimes VM\to\La^{k-1}F^*\otimes VM$. We denote these bundle maps and the
induced tensorial operators on sections all by the same symbol $\partial^*$. In
particular, $\im(\partial^*)\subset\ker(\partial^*)\subset\La^kF^*\otimes VM$ are
natural subbundles and the quotient $\ker(\partial^*)/\im(\partial^*)$ can be
naturally identified with the associated bundle
$\Cal H_k^V:=\Cal G\x_PH_k(\frak g_1,\Bbb V)$. As we have noted above, $P_+$ acts
trivially on these homology spaces, so
$\Cal H_k^V=\Cal G_0\x_{G_0}H_k(\frak g_1,\Bbb V)$. Once the homology representations
are known, these bundles thus admit a direct interpretation in terms of the
underlying $G_0$-structure.

For the remaining ingredients from \S \ref{3.3} the translation to geometry is more
subtle and requires an additional ingredient. There is a so-called \textit{grading
  element} $E\in\frak g$ such that $[E,A]=jA$ for each $A\in\frak g_j$ with
$j=-1,0,1$. The definition of $E$ implies that it lies in the center of the subalgebra
$\frak g_0$ and in the cases we consider here, it actually spans this center. Lie
theory implies that $E$ acts diagonalizably on any representation $\Bbb V$ of $\frak
g$. Assuming that $\Bbb V$ is irreducible, the resulting eigenspaces can be used to
define a decomposition $\Bbb V=\oplus_{i=0}^N\Bbb V_i$ which has the property that
for $A\in\frak g_j$ and $v\in\Bbb V_i$, one obtains $A\cdot v\in\Bbb V_{i+j}$,
c.f.\ Section 2.5 of \cite{Rel-BGG1}. In particular, each of the subspaces $\Bbb V_i$
is $G_0$-invariant while $\Bbb V^i:=\oplus_{j\geq i}\Bbb V_j$ is even
$P$-invariant. Otherwise put, $\Bbb V$ admits a $P$-invariant filtration $\Bbb V=\Bbb
V^0\supset\Bbb V^1\supset\dots\supset\Bbb V^{N-1}\supset\Bbb V^N$.

Now this filtration of course induces a filtration of the partial tractor bundle $VM$
by the natural smooth subbundles $V^jM=\Cal G\x_P\Bbb V^j$. We can form the
\textit{associated graded vector bundle} $\gr(VM)=\oplus_{i=0}^N\gr_i(VM)$ where
$\gr_i(VM):=V^iM/V^{i+1}M$. By construction $\gr_i(VM)=\Cal G\x_P\gr_i(\Bbb V)$,
where $\gr_i(\Bbb V):=\Bbb V^i/\Bbb V^{i+1}$, so $\gr(VM)$ is the natural vector
bundle associated to $\gr(\Bbb V)=\oplus\gr_i(\Bbb V)$, the associated graded
representation to $\Bbb V$. Observe that by construction the action of $P_+$ maps
each $\Bbb V_i$ to $\Bbb V_{i+1}$, so the induced action on $\gr_i(\Bbb V)$ is
trivial, while as a $G_0$-module we get $\gr_i(\Bbb V)\cong\Bbb V_i$ and hence
$\gr(\Bbb V)\cong\Bbb V$. It is crucial to realize however, that this identification
of $G_0$ modules has no geometric counterpart. On the geometric side, we only have
the canonical quotient maps $V^iM\to\gr_i(VM)$ but there are no natural injective
bundle maps (in either direction) between $VM$ and $\gr(VM)$.

We can now view the $G_0$-equivariant maps $\partial$ and $\square$ as
$P$-equivariant maps on the spaces $\La^k\frak g_{-1}^*\otimes\gr(\Bbb V)$ and this
interpretation also makes sense for $\partial^*$. The explicit formulae immediately
imply that
\begin{equation}\label{homog}
  \begin{gathered}
    \partial(\La^k\frak g_{-1}^*\otimes\Bbb V_i)\subset \La^{k+1}\frak
    g_{-1}^*\otimes \Bbb V_{i-1} \\
    \partial^*(\La^k\frak g_{-1}^*\otimes \Bbb V_i)\subset \La^{k-1}\frak
    g_{-1}^*\otimes \Bbb V_{i+1} \\
    \square(\La^k\frak g_{-1}^*\otimes \Bbb V_i)\subset \La^k\frak
    g_{-1}^*\otimes \Bbb V_i
  \end{gathered}
\end{equation}
Hence we get induced bundle maps defined on the bundles $\La^kF^*\otimes\gr_i(VM)$
with analogous homogeneity properties. We will denote these bundle maps and the
induced tensorial operators on sections by the same symbols $\partial$, $\partial^*$
and $\square$. Notice that for $\partial^*$ this is an additional abuse of notation
since we denote the operations on the original bundles and on the associated graded by
the same symbol. This is justified by the homogeneity property in \eqref{homog}
which implies that they are compatible with the projection to the associated graded,
see below. Thus we obtain a geometric version of the Hodge decomposition from
\eqref{Hodge}.

\begin{cor}\label{cor3.4}
  For any $i=0,\dots,N$, there is a decomposition into a sum of natural subbundles
  \begin{equation}\label{Hodge-geom}
\La^kF^*\otimes\gr_i(VM)=\im(\partial)\oplus\ker(\square)\oplus\im(\partial^*) . 
  \end{equation}
  The bundle map $\square$ restricts to a linear isomorphism
  $\im(\partial^*)\to\im(\partial^*)$ and there is a universal polynomial $p^{\Bbb
    V}_{k,i}\in\Bbb R[x]$ (independent of $(M,F)$)such that the linear map $p^{\Bbb
    V}_{k,i}(\square)$ restricts on $\im(\partial^*)$ to
  $(\square|_{\im(\partial^*)})^{-1}$.
\end{cor}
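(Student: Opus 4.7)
The plan is to derive everything from the algebraic statement at the representation-theoretic level and then transport it to bundles via the associated-bundle construction. Since $P_+$ acts trivially on $\frak g/\frak p\cong\frak g_{-1}$ and on each $\gr_i(\Bbb V)\cong\Bbb V_i$ (as noted in \S\ref{3.4}), we have
$$
\La^kF^*\otimes\gr_i(VM)=\Cal G_0\x_{G_0}\bigl(\La^k\frak g_{-1}^*\otimes\Bbb V_i\bigr),
$$
so the claim reduces to exhibiting the stated structures as $G_0$-equivariant data on the inducing representation and then applying the associated-bundle functor.

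For the first assertion, Theorem \ref{thm3.3} provides the $G_0$-invariant decomposition $\La^k\frak g_{-1}^*\otimes\Bbb V=\im(\partial)\oplus\ker(\square)\oplus\im(\partial^*)$. The homogeneity properties \eqref{homog} show that $\partial$, $\partial^*$ and $\square$ are compatible with the $G_0$-invariant grading $\Bbb V=\oplus_j\Bbb V_j$, so the Hodge decomposition restricts to a direct-sum decomposition of the $G_0$-module $\La^k\frak g_{-1}^*\otimes\Bbb V_i$. Passing to associated bundles yields \eqref{Hodge-geom} as a decomposition into natural subbundles.

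For the second assertion, it is again enough to argue at the level of the $G_0$-representation, since $\square$ is induced fiberwise by a $G_0$-equivariant map. Using $(\partial^*)^2=0$, for $v\in\im(\partial^*)$ we get $\square v=\partial^*\partial v\in\im(\partial^*)$, so $\square$ preserves $\im(\partial^*)$. If in addition $\square v=0$, then $v\in\ker(\square)\cap\im(\partial^*)=\{0\}$ by the Hodge decomposition, so $\square$ acts as an isomorphism on $\im(\partial^*)\subset\La^k\frak g_{-1}^*\otimes\Bbb V_i$, and the same holds for the corresponding subbundle.

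Finally, set $T:=\square|_{\La^k\frak g_{-1}^*\otimes\Bbb V_i}$, a linear endomorphism of a finite-dimensional vector space determined entirely by $(\Bbb V,k,i)$. Let $m_0(x)\in\Bbb R[x]$ denote the minimal polynomial of the restriction $T|_{\im(\partial^*)}$. Since this restriction is invertible, $m_0(0)\neq 0$, so $x$ and $m_0(x)$ are coprime in $\Bbb R[x]$ and B\'ezout provides $p^{\Bbb V}_{k,i}(x),q(x)\in\Bbb R[x]$ with $x\,p^{\Bbb V}_{k,i}(x)+m_0(x)q(x)=1$. As $T$ preserves $\im(\partial^*)$ and $m_0(T)$ vanishes there, evaluation at $T$ gives $p^{\Bbb V}_{k,i}(T)\o T=\id$ on $\im(\partial^*)$. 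The polynomial depends only on the algebraic data $(\Bbb V,k,i)$, so the identity passes to every partial Cartan geometry of type $(G,P)$. No single step is technically demanding; the only conceptual point worth emphasizing is that the geometric counterparts of $\partial$, $\partial^*$ and $\square$ only make sense on the associated graded bundle $\gr(VM)$ because $P_+$ acts trivially on each $\gr_i(\Bbb V)$, which is precisely why one must pass to the associated graded before invoking Kostant's algebraic Hodge theory.
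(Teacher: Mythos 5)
Your proof is correct, and for the first two assertions it follows essentially the same path as the paper: reduce to the inducing $G_0$-representation $\La^k\frak g_{-1}^*\otimes\Bbb V_i$ (using that $P_+$ acts trivially on $\frak g/\frak p$ and on each $\gr_i(\Bbb V)$), obtain \eqref{Hodge-geom} from Theorem \ref{thm3.3} together with the homogeneity properties \eqref{homog}, use $(\partial^*)^2=0$ to see that $\square$ coincides with $\partial^*\o\partial$ on $\im(\partial^*)$ and hence preserves it, and deduce injectivity from $\ker(\square)\cap\im(\partial^*)=\{0\}$. The one genuine divergence is how you produce the polynomial $p^{\Bbb V}_{k,i}$. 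The paper invokes Kostant's expression of $\square$ in terms of Casimir operators to conclude that $\square$ acts diagonalizably with real eigenvalues on $\La^k\frak g_1\otimes\Bbb V_i$ (complexifying as needed), and then inverts the restriction to $\im(\partial^*)$ by a polynomial in its nonzero eigenvalues. You instead take the minimal polynomial $m_0$ of $\square|_{\im(\partial^*)}$, note that invertibility forces $m_0(0)\neq0$, and use a B\'ezout identity $x\,p^{\Bbb V}_{k,i}(x)+m_0(x)q(x)=1$ to get $p^{\Bbb V}_{k,i}(\square)\o\square=\id$ on $\im(\partial^*)$. Your route is more elementary and entirely self-contained: it uses only finite-dimensionality and the invertibility already established, and would work over any field. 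What the paper's route buys is the extra structural information (real, diagonalizable spectrum) that is exploited elsewhere, for instance in Remark \ref{rem3.5} where $\deg(p^{\Bbb V}_{k,i})$ is related to the number of $\frak g_0$-isotypical components of $\im(\partial^*)$. Both constructions depend only on the algebraic data $(\Bbb V,k,i)$, so universality and independence of $(M,F)$ hold either way.
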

\begin{proof}
  The decomposition \eqref{Hodge-geom} readily follows from Theorem \ref{thm3.3} and
  also the remaining statements are of algebraic origin. Since
  $\partial^*\o\partial^*=0$, we see that on $\im(\partial^*)$, $\square$ coincides
  with $\partial^*\o\partial$, so
  $\square(\im(\partial^*))\subset\im(\partial^*)$. Theorem \ref{thm3.3} also shows
  that $\ker(\partial^*)\cap\im(\partial^*)=\{0\}$ so $\square$ is injective on
  $\im(\partial^*)$. On the inducing representations, we are dealing with linear maps
  on finite dimensional vector spaces, so this implies that 
  $\square:\im(\partial^*)\to\im(\partial^*)$ is a linear isomorphism.

To prove the last part, we refer to Kostant's analysis of $\square$, see
\cite{Kostant} and Section 3.3 of \cite{book}. If $\frak g$ is complex and $\Bbb V$
is a complex representation of $\frak g$, the map $\square$ on $\La^kg_1\otimes\Bbb
V$ can be expressed in terms of Casimir operators. This then implies that
$\square$ acts diagonalizably with real eigenvalues on each $\La^k\frak
g_1\otimes\Bbb V_i$, hence this also holds for the subspace $\im(\partial^*)$. Linear
algebra then shows that, on this subspace, the inverse of $\square$ can be written as
a polynomial with real coefficients in $\square$. In the general case, we can
complexify $\frak g$ and $\Bbb V$ (as needed) and then view $\La^k\frak
g_1\otimes\Bbb V_i$ as a linear subspace in the analogous space obtained from the
complexifications. This subspace is invariant under $\square$, which implies that the
same polynomial works.
\end{proof}

The conceptual way to interpret the homogeneity properties is via \textit{total
  degree}, which is defined to be $k+i$ for elements of $\La^k\frak
g_1\otimes\gr_i(\Bbb V)$ (or of the corresponding bundle). Equation \eqref{homog}
then simply says that all three operations preserve the total degree. Similarly, we
will say that an element of $\Om^k_F(M,VM)$ is homogeneous of degree $\geq k+i$ if it
is a section of the subbundle $\La^kF^*\otimes V^iM$. If a section $\ph$ has this
property, then there is a well defined projection (which actually acts only on the
values of $\ph$) to $\gr_{k+i}(\ph)\in\Ga(\La^kF^*\otimes\gr_i(VM))$. This has
homogeneity $k+i$ and represents the lowest homogeneous component of $\ph$. In this
notation, the compatibility of $\partial^*$ with projection to the associated graded
reads as follows: If $\ph$ is homogeneous of degree $\geq k+i$, then so is
$\partial^*\ph$ and $\gr_{k+i}(\partial^*\ph)=\partial^*(\gr_{k+i}(\ph))$.

\subsection{The BGG machinery}\label{3.5}
Now we can quickly develop the BGG machinery in our setting following
\cite{Rel-BGG2}. The first step is analyzing the homogeneity properties of the
partial twisted exterior derivative, which suggests the definition of a geometric
version of the Kostant Laplacian.

\begin{lemma}\label{lem3.5}
(1) The twisted exterior derivative $d^V_F$ is compatible with (total) homogeneity
  and induces $\partial$ on the associated graded. More explicitly, if
  $\ph\in\Om^k_F(M,VM)$ has total homogeneity $\geq k+i$ that so has $d^V_F\ph\in
  \Om^{k+1}_F(M,VM)$ and for its projection to the associated graded, we get
  $\gr_{k+i}(d^V_F\ph)=\partial(\gr_{k+i}(\ph))$.

(2) The operator $\square^R:=\partial^*\o d^V_F+d^V_F\o \partial^*$ maps each
  $\Om^k_F(M,VM)$ to itself. It also preserves homogeneity and induces $\square$ on the
  associated graded.

  (3) Fix integers $k$ with $0\leq k\leq n$ and $i$ with $0\leq i\leq N$ and define
  $T_{k,i}:=p^{\Bbb V}_{k,i}(\partial^*\o d^V_F)$ where $p^{\Bbb V}_{k,i}$ is the
  polynomial from Corollary \ref{cor3.4}. Then $T_{k,i}$ maps
  $\Ga(\im(\partial^*))\subset\Om^k(M,VM)$ to itself and has the following property.
  If $\ps\in\Ga(\im(\partial^*))$ has homogeneity $\geq \ell:=k+i$, then the element
  $\tilde\ps:=d^V_F(T_{k,i}(\ps))\in\Om^{k+1}_F(M,VM)$ has homogeneity $\geq\ell$ and
  $\gr_\ell(\partial^*(\tilde\ps))=\gr_\ell(\ps)$.
\end{lemma}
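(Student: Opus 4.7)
The plan is to analyze homogeneity properties of the basic operators $\mathcal D^{\mathfrak a}$, $\partial_{\mathfrak g}$, and $\partial^*$ separately, and then combine them using the algebraic machinery of \S\ref{3.3}--\S\ref{3.4}.

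For part (1), I would decompose $d^V_p=\mathcal D^{\mathfrak a}+\partial_{\mathfrak g}$ after restriction to $F$-insertions. The fundamental derivative $\mathcal D_s$ preserves each $V^iM$, because differentiating an equivariant function with values in the $P$-submodule $\mathbb V^i$ along an invariant vector field leaves the values in $\mathbb V^i$. Hence $\mathcal D^{\mathfrak a}$ sends $\Ga(\La^kF^*\otimes V^iM)$ into $\Ga(\La^{k+1}F^*\otimes V^iM)$, of strictly larger total degree $k+1+i$, and so it vanishes on the associated graded at level $k+i$. For $\partial_{\mathfrak g}$, I would read off from the formula \eqref{partial-def}, combined with $[\mathfrak g_a,\mathfrak g_b]\subset\mathfrak g_{a+b}$ and $\mathfrak g_a\cdot\mathbb V_b\subset\mathbb V_{a+b}$, that every term has total homogeneity at least $k+i$, and the \emph{only} contribution of total homogeneity exactly $k+i$ comes from the first sum with insertion in $\mathfrak g_{-1}$; the bracket terms and the $\mathfrak g_0,\mathfrak g_1$ parts of the first sum all strictly raise homogeneity. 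This surviving piece is precisely $\partial_{\mathfrak g_{-1}}=\partial$.

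For part (2), $\partial^*$ is tensorial and, by \eqref{homog}, lowers form degree by $1$ while raising the $\mathbb V$-filtration index by $1$, so it preserves total homogeneity and induces $\partial^*$ on the associated graded. Together with (1), both $\partial^*\o d^V_p$ and $d^V_p\o\partial^*$ preserve $\Om^k_p(M,VM)$ (the form-degree shifts cancel) and preserve total homogeneity, so $\square^R$ does as well. Since $\gr$ is multiplicative on compositions that are degree-preserving, the induced operator on the associated graded is $\partial^*\partial+\partial\partial^*=\square$.

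For part (3), the natural candidate is
\[
T_{k,i}:=p^{\mathbb V}_{k,i}(\partial^*\o d^V_p),
\]
with $p^{\mathbb V}_{k,i}$ the polynomial from Corollary \ref{cor3.4}. Since any application of $\partial^*$ lands in $\im(\partial^*)$, and the constant-term contribution $a_0\,\psi$ also lies there, $T_{k,i}$ preserves $\Ga(\im(\partial^*))$. By (1) and (2), $\partial^*\o d^V_p$ preserves total homogeneity and induces $\partial^*\o\partial=\square$ on the associated graded (using $\partial^*\o\partial^*=0$ on $\im(\partial^*)$). Hence if $\psi\in\Ga(\im(\partial^*))$ has homogeneity $\geq\ell$, so does $T_{k,i}\psi$, and at level $\ell$ we get $\gr_\ell(T_{k,i}\psi)=p^{\mathbb V}_{k,i}(\square)(\gr_\ell\psi)=\square^{-1}(\gr_\ell\psi)$ by the defining property of $p^{\mathbb V}_{k,i}$ on $\im(\partial^*)$. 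Setting $\tilde\psi:=d^V_p(T_{k,i}\psi)$, parts (1)--(2) give that $\tilde\psi$ and $\partial^*\tilde\psi$ both have homogeneity $\geq\ell$, and
\[
\gr_\ell(\partial^*\tilde\psi)=\partial^*\partial(\gr_\ell T_{k,i}\psi)=\square(\square^{-1}(\gr_\ell\psi))=\gr_\ell\psi.
\]
The main obstacle will be the bookkeeping in part (1): carefully identifying which of the many terms in $\tilde d^V_p$ really raise total homogeneity strictly, and checking that the passage between the $\La^k\mathcal A^*_pM$ picture (where $\tilde d^V_p$ is defined) and the $\La^kF^*$ picture of $\Om^k_p(M,VM)$ is compatible with the homogeneity filtration. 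Once part (1) is established, parts (2) and (3) follow by the essentially algebraic template developed in \cite{Rel-BGG2}.
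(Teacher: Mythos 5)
Your proposal is correct and follows essentially the same route as the paper's proof: the same splitting $d^V_p=\Cal D^{\frak a}+\partial_{\frak g}$ with $\Cal D^{\frak a}$ strictly raising total homogeneity (via naturality of $\Cal D$) and the $\frak g_{-1}$-part of the algebraic differential surviving as $\partial$ on the associated graded, the same observation that $\partial^*$ preserves total homogeneity and induces itself on the graded, and the same definition $T_{k,i}=p^{\Bbb V}_{k,i}(\partial^*\o d^V_p)$ with the identical graded computation $\gr_\ell(\partial^*\tilde\ps)=\square\,(\square|_{\im(\partial^*)})^{-1}\gr_\ell(\ps)=\gr_\ell(\ps)$. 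The bookkeeping you flag as the remaining obstacle is exactly what the paper carries out, and your sketch already contains all the needed observations.
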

\begin{proof}
(1) To prove the homogeneity statement, we assume $\ph\in\Ga(\La^kF^*\otimes V^iM)$
  for some $i$, and we have to show that $d^V_F\ph\in\Ga(\La^{k+1}F^*\otimes
  V^{i-1}M)$. To deal with $d^V_F$, we have to view $\ph$ as a section of $\La^k\Cal
  A^*M\otimes V^iM$ and use the definition of $\tilde d^V_F$. Naturality of $\Cal D$
  implies that for any $s\in\Ga(\Cal AM)$, $\Cal D_s\ph$ is a section of the
  subbundle $\La^kF^*\otimes V^iM$. Therefore each summand in the definition of $\Cal
  D^{\frak a}\ph$ in \eqref{Da-def} lies in $V^iM\subset V^{i-1}M$, so $D^{\frak a}$
  actually raises total homogeneity by $1$. On the other hand, the action of an
  element $A=A_{-1}+A_0+A_1\in\frak g$ maps $v\in\Bbb V_i$ to $\Bbb V_{i-1}\oplus\Bbb
  V_i\oplus\Bbb V_{i+1}$ and the component in $\Bbb V_{i-1}$ equals $A_{-1}\cdot
  v$. This readily implies that $\partial_{\frak g}(\ph)\in\Ga(\La^{k+1}F^*\otimes
  V^{i-1}M)$, so the homogeneity statement follows. But we also see that
  $\gr_{k+i}(\Cal D^{\frak a}\ph)=0$ and from the definitions and the above
  observation on the lowest component of $A\cdot v$, it immediately follows that
  $\gr_{k+i}(\partial_{\frak g}(\ph))=\partial(\gr_{k+i}(\ph))$.

   (2) We have noted in \S \ref{3.4} that
  $\partial^*:\Om^k_F(M,VM)\to\Om^{k-1}_F(M,VM)$ preserves homogeneity and induces
  $\partial^*$ on the associated graded. Hence both claims follow immediately from
  part (1).

  (3) Since $\partial^*\o d^V_F$ maps $\Ga(\im(\partial^*))$ to itself, so does
  $T_{k,i}$. Since $\partial^*$ preserves homogeneities, part (1) shows that $T_{k,i}$
  preserves being of homogeneity $\geq\ell$ for any $\ell$. If
  $\ps\in\Ga(\im(\partial^*))$ is homogeneous of degree $\geq\ell=k+i$ then so is
  $T_{k,i}(\ps)$ and this property is preserved by application of $d^V_F$ and then of
  $\partial^*$. Using part (1), we then compute that
  $$ 
  \gr_\ell(\partial^*(d^V_F(T_{k,i}(\ps))))=(\partial^*\o
  \partial)(\gr_\ell(T_{k,i}(\ps)))).
  $$ But for $\gr_\ell(T_{k,i}(\ps))$ we compute similarly step by step to conclude
  that this equals $p^{\Bbb V}_{k,i}(\partial^*\o\partial)(\gr_{\ell}(\psi))$. But on
  $\im(\partial^*)$, we have $\partial^*\o\partial=\square$ and then then results
  follows since $\square\o p^{\Bbb V}_{k,i}(\square)$ is the identity on
  $\Ga(\im(\partial^*))$ by Corollary \ref{cor3.4}.
\end{proof}

From this, the key elements of the BGG machinery follow quickly.

\begin{thm}\label{thm3.5}
  Consider the operator $\square^R:\Om^k_F(M,VM)\to\Om^k_F(M,VM)$ from Lemma
  \ref{lem3.5}.

  (1) $\ker(\square^R)=\Ga(\ker(\partial^*))\cap\ker(\partial^*\o d_F^V)$ and the bundle
  projection $$\ker(\partial^*)\to \ker(\partial^*)/\im(\partial^*)=\Cal H^V_k$$
  induces an isomorphism $\ker(\square^R)\to\Ga(\Cal H^V_k)$.

  (2) Viewed as an operator $S:\Ga(\Cal H^V)\to\Om^k_F(M,VM)$, the inverse of the
  isomorphism from (1) is a differential operator naturally associated to the given
  partial Cartan geometry. For $\al\in\Ga(\Cal H^V_k)$, the element
  $S(\al)\in\Om^k_F(M)$ is uniquely characterized by the facts that
  $\partial^*(S(\al))=0$, $\pi_H(S(\al))=\al$ and $\partial^*(d^V_F(S(\al)))=0$. Here
  $\pi_H$ denotes the quotient projection $\Ga(\ker(\partial^*))\to\Ga(\Cal H^V_k)$. 
\end{thm}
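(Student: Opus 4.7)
The plan is to combine the homogeneity analysis from Lemma \ref{lem3.5} with the algebraic Hodge decomposition from Corollary \ref{cor3.4}.

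For the equality in part (1), the inclusion $\Ga(\ker(\partial^*))\cap\ker(\partial^*\o d_p^V)\subset\ker(\square^R)$ is immediate from $\square^R=\partial^*\o d_p^V+d_p^V\o\partial^*$. Conversely, suppose $\phi\in\ker(\square^R)$. Applying $\partial^*$ to $\square^R\phi=0$ and using $\partial^*\o\partial^*=0$ gives $\partial^*\o d_p^V\o\partial^*\phi=0$. I would argue that $\eta:=\partial^*\phi$ must vanish: otherwise, let $\ell$ be its lowest total homogeneity, and use the fact that $\partial^*$ is compatible with the associated graded (see \S \ref{3.4}) to conclude that $\gr_\ell(\eta)$ is a nonzero section of $\im(\partial^*)$ at homogeneity $\ell$. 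Lemma \ref{lem3.5} then yields $\gr_\ell(\partial^*\o d_p^V\eta)=\partial^*\o\partial(\gr_\ell\eta)=\square(\gr_\ell\eta)$, and by Corollary \ref{cor3.4} $\square$ is injective on $\im(\partial^*)$, forcing $\gr_\ell\eta=0$, a contradiction. Hence $\partial^*\phi=0$ and then the equation $\square^R\phi=0$ reduces to $\partial^*\o d_p^V\phi=0$.

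For the isomorphism $\ker(\square^R)\to\Ga(\Cal H_k^V)$, injectivity follows by the same argument: an element of $\ker(\square^R)$ projecting to zero lies in $\Ga(\im(\partial^*))\cap\ker(\partial^*\o d_p^V)$, and the associated-graded contradiction forces it to vanish. Surjectivity I would establish by induction on the lowest nonzero homogeneity of the obstruction. Given $\al\in\Ga(\Cal H_k^V)$, choose any lift $\phi_0\in\Ga(\ker(\partial^*))$ projecting to $\al$; this exists because the bundle surjection $\ker(\partial^*)\to\Cal H_k^V$ is surjective on sections. If $\psi_0:=\partial^*\o d_p^V\phi_0\neq 0$, let $\ell=k+i$ be its lowest homogeneity and apply Lemma \ref{lem3.5}(3) to produce $\chi:=T_{k,i}(\psi_0)\in\Ga(\im(\partial^*))$ with $\gr_\ell(\partial^*\o d_p^V\chi)=\gr_\ell(\psi_0)$. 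Replacing $\phi_0$ by $\phi_1:=\phi_0-\chi$ preserves membership in $\Ga(\ker(\partial^*))$ and the projection to $\al$, while strictly increasing the lowest homogeneity of the resulting obstruction $\psi_1$. Since total homogeneities are bounded above (by $k+N$ where $N$ is the grading length of $\Bbb V$), the procedure terminates at some $\phi_m$ with $\partial^*\o d_p^V\phi_m=0$, providing the desired preimage.

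Part (2) then follows formally from part (1). The operator $S$ obtained above is a finite composition of the natural operators $d_p^V$, $\partial^*$ and the polynomial operators $T_{k,i}$, hence a differential operator of bounded order canonically associated to the partial Cartan geometry. The three stated conditions say precisely that $S(\al)\in\ker(\square^R)$ and $\Pi(S(\al))=\al$, so the isomorphism in (1) pins down $S(\al)$ uniquely. The main obstacle I expect is arranging the initial lift $\al\mapsto\phi_0$ in the surjectivity step to be itself natural: the iterative corrections are manifestly built from natural operations, but a fully canonical operator requires a natural choice of $\phi_0$. One option is to split the $G_0$-equivariant short exact sequence $\im(\partial^*)\hookrightarrow\ker(\partial^*)\twoheadrightarrow H_k(\frak g_1,\Bbb V)$ using reductivity of $G_0$ and to transport this splitting to the bundle level via the identification of $\Cal H_k^V$ with a subbundle of the associated graded of $\ker(\partial^*)$; independence of the final operator from the specific choice then follows a posteriori from the uniqueness characterization in (2).
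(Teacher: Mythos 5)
Your part (1) is essentially the paper's own argument: the same induction on total homogeneity, using that $\partial^*$ and $d^V_p$ induce $\partial^*$ and $\partial$ on the associated graded and that $\square=\partial^*\o\partial$ is injective on $\im(\partial^*)$, both to show $\ker(\square^R)=\Ga(\ker(\partial^*))\cap\ker(\partial^*\o d^V_p)$ and to get injectivity of $\pi_H$; your iterative correction $\ph\mapsto\ph-T_{k,i}(\partial^*(d^V_p\ph))$ is exactly the paper's operator $S_i$. (Like the paper, you use without full justification that the lowest graded component of a section of the filtered subbundle $\im(\partial^*)$ lies in the graded $\im(\partial^*)$; you at least flag this.) The only real divergence is in part (2), where you obtain naturality of $S$ by starting from a non-canonical lift $\al\mapsto\ph_0$ (via a $G_0$-equivariant splitting, which is not $P$-equivariant and hence requires a choice at the bundle level) and then arguing a posteriori that the output is choice-independent because it satisfies the characterizing conditions, which by (1) determine it uniquely. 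The paper avoids the choice altogether: it observes that each $S_i$ maps $\Ga(\ker(\partial^*))$ to itself, preserves $\Ga(\im(\partial^*))$ and raises the homogeneity of sections of that subbundle, so the composite $S_N\o\dots\o S_0$ annihilates $\Ga(\im(\partial^*))$ and therefore descends directly to a natural operator on $\Ga(\Cal H^V_k)=\Ga(\ker(\partial^*))/\Ga(\im(\partial^*))$. Both routes are valid; the paper's is cleaner as a construction, while yours matches how one actually computes $S$ in practice (cf.\ Remark \ref{rem3.5}).
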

\begin{proof}
(1) We first claim that $\square^R$ is injective $\Ga(\im(\partial^*))$. Any section
  of this subbundle can be written as $\partial^*(\ps)$ for $\ps\in\Om^{k+1}_F(M,V)$
  and we have to prove that $\square^R(\partial^*(\ps))=0$ implies
  $\partial^*(\ps)=0$. To prove this, we assume inductively that for some $\ell\geq
  k$, $\partial^*(\ps)$ is homogeneous of degree $\geq \ell$. Since
  $\partial^*\o\partial^*=0$, we get
  $0=\square^R(\partial^*(\psi))=\partial^*(d^V_F(\partial^*(\ps)))$ and applying
  $\gr_\ell$ we conclude that
  $0=\partial^*(\partial(\gr_\ell(\partial^*(\ps))))$. Hence
  $\gr_\ell(\partial^*(\ps))$ is a section of the subbundle
  $\ker(\partial^*)\cap\im(\partial)\subset\La^kF^*\otimes\gr_{\ell-k}(VM)$, which is
  trivial by the algebraic Hodge decomposition \eqref{Hodge}. Since
  $\gr_\ell(\partial^*(\ps))=0$, we conclude that $ \partial^*(\ps)$ is homogeneous
  of degree $\geq\ell+1$. Since $\partial^*(\ps)$ automatically is homogeneous of
  degree $\geq k$ and the filtration of $\Bbb V$ has finite length, we conclude
  inductively that $\partial^*(\ps)=0$ as claimed.

  Now suppose that $\ph\in\Om^k_F(M,VM)$ satisfies $\square^R(\ph)=0$. Then
  $0=\partial^*(\square^R(\ph))$ and by definition, this equals
  $\square^R(\partial^*(\ph))$. From the injectivity statement above, we conclude
  $\partial^*(\ph)=0$ and hence $0=\square^R(\ph)=\partial^*(d^V_F(\ph))$. But this
  says that $\ph\in\Ga(\ker(\partial^*))\cap\ker(\partial^*\o d^V_F)$. Since the
  opposite inclusion is obvious, this implies the claimed description of
  $\ker(\square^R)$.

Next, we claim that the restriction of $\pi_H:\Ga(\ker(\partial^*))\to\Ga(\Cal
H^V_k)$ to $\ker(\square^R)$ is injective. Suppose that $\ph_1,\ph_2\in\Om^k_F(M,VM)$
satisfy $\partial^*(\ph_i)=0$ for $i=1,2$ and $\pi_H(\ph_1)=\pi_H(\ph_2)$. Then
$\ph_2=\ph_1+\partial^*(\ps)$ for some $\ps\in\Om^{k+1}_F(M,VM)$. But then if
$\square^R(\ph_i)=0$ for $i=1,2$ implies $\square^R(\partial^*(\ps))=0$ and hence
$\partial^*(\ps)=0$ and $\ph_1=\ph_2$ by the injectivity statement above.

Now for $\al\in\Ga(\Cal H^k_V)$, we can choose some representative
$\ph\in\Om^k_F(M,VM)$, i.e.\ $\partial^*(\ph)=0$ and $\pi_H(\ph)=\al$. Assume
inductively that $\square^R(\ph)=\partial^*(d^V_F(\ph))$ is homogeneous of degree
$\geq \ell=k+i$. Define
\begin{equation}\label{Si-def}
  S_i(\ph):=\ph-T_{k,i}(\partial^*(d^V_F(\ph))),
\end{equation}
where $T_{k,i}$ is the operator from part (3) of Corollary \ref{cor3.4}. Since
$T_{k,i}$ has values in $\Ga(\im(\partial^*))$, we see that $\partial^*(S_i(\ph))=0$
and $\pi_H(S_i(\ph))=\al$. Since $S_i(\ph)\in\Ga(\ker(\partial^*))$ we get
$\square^R(S_i(\ph))=\partial^*(d^v_F(S_i(\ph)))$. Part (3) of Corollary \ref{cor3.4}
then implies that $\partial^*(d^V_F(S_i(\ph)))$ is homogeneous of degree $\geq \ell$
and that $\gr_\ell(\partial^*(d^V_F(S_i(\ph))))=0$. Thus $\square^R(S_i(\ph))$ is
homogeneous of degree $\geq \ell+1$. Since $\square^R(\ph)$ is always homogeneous of
degree $\geq k$, we conclude inductively that we also obtain a representative $\ph$
for $\al$ such that $\square^R(\ph)=0$ and this completes the proof of (1).

\medskip

(2) We can view \eqref{Si-def} as defining a differential operator on all of
$\Ga(\ker(\partial^*))\subset\Om^k_F(M,VM)$ (noting that also there $\square^R$
coincides with $\partial^*\o d^V_F$). As above, this has values in
$\Ga(\ker(\partial^*))$ and it preserves total homogeneity as well as the subspace
$\Ga(\im(\partial^*))\subset\Ga(\ker(\partial^*))$. But by definition $S_i=\id-
T_{k,i}\o\square^R$ so for a section $\ps\in\Ga(\im(\partial^*))$, which is
homogeneous of degree $\geq k+i$, we conclude as above that $S_i(\psi)$ is
homogeneous of degree $\geq k+i+1$.

Now consider the operator $S:=S_N\o S_{N-1}\o\dots\o S_1\o S_0$ as mapping
$\Ga(\ker(\partial^*))$ to itself. This also preserves total homogeneity, and for
$\ps\in\Ga(\im(\partial^*))$ we immediately conclude from above that $S(\ps)$ is
homogeneous of degree $\geq k+N$ and hence $S(\ps)=0$. Hence $S$ descends to a
natural differential operator $\Ga(\Cal H^V_k)\to\Om^k_F(M,VM)$ and the last part of
the proof of (1) shows that this induces the inverse of the isomorphism
$\ker(\square^R)\to \Ga(\Cal H^V_k)$ constructed in (1). The characterization of this
inverse then readily follows from part (1).
\end{proof}

\begin{remark}\label{rem3.5}
As with the standard BGG machinery, the construction of the splitting operator in the
proof of Theorem \ref{thm3.5} is mainly useful for theoretical purposes. It does not
provide an effective description of the splitting operators or a reasonable way to
compute explicit formulae (except in extremely simple situations). The simplest way
to obtain explicit formulae is to determine $S(\al)$ by explicitly verifying the
characterizing conditions in part (2) of the theorem. (The best way to do this is
using the analog of the theory of Weyl structures as discussed in \cite{Weyl} and
Chapter 5 of \cite{book} in our setting.)

The splitting operator also is of much lower order than one would expect from the
construction in the proof. The first simplification arises from the fact that
$\ker(\square)$ is a completely reducible representation and each of the irreducible
components is contained in $\La^k\frak g_1\otimes\Bbb V_{i_0}$ for some fixed
$i_0$. Such a component corresponds to a subbundle in $\Cal H^V_k$, and for sections
of that subbundle, $S$ actually coincides with $S_N\o\dots\o S_{i_0+1}\o
S_{i_0}$. But still, the expected order of this is much higher than its actual
order. The expected order of the operator $S_i$ is $\deg(p^{\Bbb V}_{k,i})+1$, which
by Kostant's results roughly equals the number of $\frak g_0$-isotypical components
in the representation $\im(\partial^*)\subset\La^k\frak g_1\otimes\Bbb V_i$. So the
expected order of $S_N\o\dots\o S_{i_0+1}\o S_{i_0}$ roughly is the number of
isotypical components of $\im(\partial^*)\subset\La^k\frak g_1\otimes\Bbb
V^{i_0}$. But simple homogeneity considerations show that the actual order is
$\leq N-i_0$.
\end{remark}

\subsection{BGG sequences and BGG resolutions}\label{3.6}
The further developments mainly use formal properties that we have derived already
and hence are very closely parallel to the usual theory of BGG sequences. Thus we
keep the discussion rather brief. By part (2) of Theorem \ref{thm3.5}, the splitting
operators have the property that $d^V_F(S(\al))\in\Ga(\ker(\partial^*))$ for any
$\al\in\Ga(\Cal H^V_k)$, so these can be projected to the homology bundles to define
the BGG operators.

\begin{definition}\label{def3.6}
  Let $VM$ be a partial tractor bundle on $(M,F)$ induced by a representation $\Bbb
  V$ of $G$ and let $S$ denote the splitting operators from Theorem \ref{thm3.5}. For
  $\al\in\Ga(\Cal H^V_k)$, we define $D^V_F(\al):=\pi_H(d^V_F(S(\al))\in\Ga(\Cal
  H^V_{k+1})$. For each $k$, this defines a natural differential operator
  $D^V_F:\Ga(\Cal H^V_k)\to\Ga(\Cal H^V_{k+1})$ called the $k$th partial BGG operator
  associated to $\Bbb V$. The sequence $(\Ga(\Cal H^V_*),D^V_F)$ of operators is
  called the partial BGG sequence determined by $\Bbb V$.
\end{definition}

We do not go into a detailed discussion of examples of partial BGG sequences
here. The point about this is that the bundles that are contained a BGG sequence and
the orders of the BGG operators can be easily deduced from the corresponding
information for the usual BGG sequences associated to AHS structures. In the
description of the bundles, one basically only has to replace the tangent bundle by
$F$ and then apply algebraic constructions accordingly. The orders of the operators
are exactly the same as for standard BGG sequences. Indeed, an irreducible component
$W_1\subset H_k(\frak g_1,\Bbb V)$ has to be contained in $\La^k\frak g_1\otimes\Bbb
V_i$ for some fixed number $i_1$. Denoting by $i_2$ the corresponding number for an
irreducible component $W_2\subset H_{k+1}(\frak g_1,\Bbb V)$, the order of the
component of the BGG operator mapping sections of the bundle associated to $W_1$ to
sections of the one associated to $W_2$ simply is $i_2-i_1+1$. In particular, this
shows that for any type of AHS-structure, even the first operator in the BGG sequence
can have arbitrarily high order. Also the description of the principal parts of BGG
operators in terms of distinguished connections for a AHS structure admits and analog
using distinguished partial connections, but we do not go into this here.

\medskip

The comparison of the BGG sequence to the sequence $(\Om^*_F(M,VM),d^V_F)$ is
particularly interesting in the case that the latter sequence is a complex,
i.e.~$d^V_F\o d^V_F=0$. However, there is also some interesting information available
in general. In particular, in degree zero we obtain an analog of the concept of
normal solutions of the first BGG operator, see \cite{Leitner} and \cite{polynomiality}. 

\begin{thm}\label{thm3.6}
  For a partial tractor bundle $VM\to M$, consider the partial twisted exterior
  derivative $d^V_F$ and the induced partial BGG operators $D^V_F$. Then we have:

  (1) The linear subspace $\ker(d^V_F)\cap\Ga(\ker(\partial^*))\subset\Om^k_F(M,VM)$
  projects isomorphically onto a linear subspace of
  $\ker(D^V_F)\subset\Ga(\Cal H^V_k)$.  In degree zero, this means that the space of
  sections of $VM$ that are parallel for the partial tractor connection projects
  isomorphically onto a subspace of the kernel of the first partial BGG operator.

  (2) If $d^V_F\o d^V_F=0$ on $\Om^k_F(M,VM)$, then $D^V_F\o D^V_F=0$ on $\Ga(\Cal
  H^V_k)$. If this holds for all $k$, and hence both sequences are complexes, then
  their cohomologies are canonically isomorphic.
\end{thm}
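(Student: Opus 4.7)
\textbf{For (1),} suppose $\ph \in \ker(d^V_p) \cap \Ga(\ker\partial^*) \subset \Om^k_p(M,VM)$. Then
\[
\square^R(\ph) = \partial^*(d^V_p\ph) + d^V_p(\partial^*\ph) = 0,
\]
so by Theorem~\ref{thm3.5}(1), $\ph \in \ker(\square^R)$ and $S(\pi_H(\ph)) = \ph$. Hence
\[
D^V_p(\pi_H(\ph)) = \pi_H\bigl(d^V_p(S(\pi_H(\ph)))\bigr) = \pi_H(d^V_p\ph) = 0,
\]
and the projection $\pi_H$ is injective on $\ker(\square^R)$ by the same theorem. In degree zero $\partial^*$ vanishes identically, so the condition $\partial^*\ph = 0$ is vacuous and one recovers exactly the sections of $VM$ parallel for the partial tractor connection.

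\textbf{The first assertion of (2)} rests on the chain-map identity $d^V_p \o S = S \o D^V_p$, which holds whenever $d^V_p \o d^V_p = 0$. Indeed, Theorem~\ref{thm3.5}(2) gives $\partial^*(d^V_p S(\al)) = 0$, and trivially $\partial^*(d^V_p(d^V_p S(\al))) = 0$, so $d^V_p S(\al) \in \ker(\square^R)$; the uniqueness in Theorem~\ref{thm3.5}(2) then forces $d^V_p S(\al) = S(\pi_H(d^V_p S(\al))) = S(D^V_p\al)$. Consequently $S \o D^V_p \o D^V_p = d^V_p \o d^V_p \o S = 0$, and since $\pi_H \o S = \id$ makes $S$ injective, $D^V_p \o D^V_p = 0$.

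\textbf{For the cohomology isomorphism,} the chain-map identity makes $S$ induce $[S]\colon H^k(D^V_p) \to H^k(d^V_p)$, $[\al] \mapsto [S(\al)]$. I would prove bijectivity via the following \emph{Hodge adjustment lemma}: for every $\ps \in \Om^k_p(M,VM)$ there exists $\ga \in \Om^{k-1}_p(M,VM)$ with $\ps - d^V_p\ga \in \Ga(\ker\partial^*)$. Granting this, surjectivity of $[S]$ follows: for $d^V_p$-closed $\ph$, put $\tilde\ph := \ph - d^V_p\be \in \ker\partial^*$, so $\square^R(\tilde\ph) = 0$; then $\al := \pi_H(\tilde\ph)$ satisfies $S(\al) = \tilde\ph$, $D^V_p\al = \pi_H(d^V_p\tilde\ph) = 0$, and $[S(\al)] = [\ph]$. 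For injectivity, if $S(\al) = d^V_p\ps$, pick $\ga$ so that $\tilde\ps := \ps + d^V_p\ga \in \ker\partial^*$ (without changing $d^V_p\tilde\ps = S(\al)$); since $S(\al) \in \ker\partial^*$ by Theorem~\ref{thm3.5}(2), one has $\tilde\ps \in \ker(\square^R)$, hence $\tilde\ps = S(\pi_H(\tilde\ps))$, and the chain-map identity together with injectivity of $S$ gives $\al = D^V_p(\pi_H(\tilde\ps))$.

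\textbf{The main obstacle} is the Hodge adjustment lemma, which I would prove by induction on the homogeneity of $\partial^*\ps$, in close parallel with the construction of the splitting operator in the proof of Theorem~\ref{thm3.5}(2). If $\eta := \partial^*\ps \in \Ga(\im(\partial^*))$ has homogeneity $\geq \ell$, then Lemma~\ref{lem3.5}(3) applied to $\eta$ (with degree $k-1$ and $i = \ell - k + 1$) produces $\tilde\eta := d^V_p\bigl(T_{k-1,\ell-k+1}(\eta)\bigr)$ with homogeneity $\geq \ell$ and $\gr_\ell(\partial^*\tilde\eta) = \gr_\ell(\eta)$. Setting $\ga_1 := T_{k-1,\ell-k+1}(\partial^*\ps)$, the residue $\partial^*(\ps - d^V_p\ga_1)$ has homogeneity $\geq \ell + 1$. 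Since the filtration on $\Bbb V$ has finite length $N$, iterating at most $N+1$ times forces the residue into a trivial subbundle, so the sum of the individual corrections yields the required $\ga$. The only delicate point is the bookkeeping between the filtrations on $\Om^k_p$ and $\Om^{k-1}_p$ and the naturality of $T_{k-1,\cdot}$, both of which are handled by the same homogeneity arguments that already appear in Lemma~\ref{lem3.5}.
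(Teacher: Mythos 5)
Your proposal is correct and follows essentially the same route as the paper: part (1) via the identification of $\ker(d^V_p)\cap\Ga(\ker(\partial^*))$ with a subspace of $\ker(\square^R)$, the chain-map identity $d^V_p\o S=S\o D^V_p$ for the first assertion of (2), and the cohomology isomorphism via exactly the paper's ``adjustment'' claim that every $\ps$ can be corrected by an exact term to land in $\Ga(\ker(\partial^*))$, proved by the same induction on the homogeneity of $\partial^*\ps$ using the operators $T_{k-1,i}$. The only cosmetic differences are that you establish the chain-map identity before deducing $D^V_p\o D^V_p=0$ (the paper does it in the opposite order) and that you correctly attribute $T_{k,i}$ to Lemma \ref{lem3.5}(3), where the paper's own citations point to Corollary \ref{cor3.4}.
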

\begin{proof}
  (1) Theorem \ref{thm3.5} (1) shows that
  $\Ga(\ker(\partial^*))\cap\ker(d^V_F)\subset\ker(\square^R)$ and hence $\pi_H$ is
  injective on this subspace. But for $\ph\in\ker(\square^R)$ we get
  $\ph=S(\pi_H(\ph))$ and thus $D^V_F(\pi_H(\ph))=\pi_H(d^V_F(\ph))=0$, which implies
  the claim. In degree zero, we have $\ker(\partial^*)=VM$ and $d^V_F$ is just the
  partial tractor connection, so the second statement follows.

  \smallskip
  
  (2) For $\al\in\Ga(\Cal H^V_k)$ consider
  $\ph:=d^V_F(S(\al))\in\Ga(\ker(\partial^*))$, which by definition satisfies
  $\pi_H(\ph)=D^V_F(\al)$. But by assumption $d^V_F(\ph)=0$ so part (1) shows that
  $\pi_H(\ph)=D^V_F(\al)\in\ker(D^V_F)$ and hence $D^V_F\o D^V_F=0$ on
  $\Ga(\Cal H^V_k)$. From the proof of part (1) we also see that $\ph=S(\pi_H(\ph))$, 
  which shows that $d^V_F(S(\al))=S(D^V_F(\al))$. Assuming that $d^V_F\o d^V_F=0$ in
  all degrees, we see that the BGG sequence is a complex and that the splitting
  operators satisfy $S\o D^V_F=d^V_F\o S$. Hence they define a chain map which in
  turn induces a map in cohomology.

  To prove the statement about cohomology, we first claim that for any element
  $\ph\in\Om^k_F(M,VM)$, there exists $\tilde\ph\in\Om^{k-1}_F(M,VM)$ such that
  $\ph+d^V_F(\tilde\ph)\in\Ga(\ker(\partial^*))$. Since $\ph$ is automatically has
  homogeneity $\geq k$, the same holds for $\partial^*(\ph)$. Assuming inductively
  that $\partial^*(\ph)$ is homogeneous of degree $\geq \ell=k+i$, we consider
  $d^V_FT_{k-1,i+1}(\partial^*(\ph))$ for the operator from part (3) of Corollary
  \ref{cor3.4}. By that Corollary, this has homogeneity $\geq \ell$ and applying
  $\gr_{\ell}$ we obtain $\gr_\ell(\partial^*(\ph))$. But this shows that
  $\partial^*(\ph-d^V_FT_{k-1,i+1}(\partial^*(\ph)))$ has homogeneity $\geq \ell+1$,
  and inductively the claim follows.  

  The claim readily implies that any cohomology class for $d^V_F$ has a
  representative which is a section of $\ker(\partial^*)$. If $\ph$ is such a
  representative, we get $\ph=S(\pi_H(\ph))$ and $D(\pi_H(\ph))=0$ by part (1). This
  shows that the map in cohomology induced by $S$ is surjective. To prove injectivity, 
  take $\al\in\ker(D^V_F)$ and assume that $S(\al)=d^V_F(\ps)$ for some
  $\ps\in\Om^{k-1}_F(M,VM)$. This implies $\partial^*(d^V_F(\psi))=0$ and by the
  claim and since $d^V_F\o d^V_F=0$, we may assume that
  $\ps\in\Ga(\ker(\partial^*))$. But then $\psi=S(\pi_H(\psi))$ by part (2) of
  Theorem \ref{thm3.5} and hence $\al=\pi_H(d^V_F(\psi))=D(\pi_H(\psi))$. This shows
  injectivity of the map in cohomology induced by $S$.
\end{proof}

\end{document}